\def\@setcopyright{}
\newtheorem{lemma}{Lemma}[section]
\newtheorem{proposition}[lemma]{Proposition}
\newtheorem{theorem}[lemma]{Theorem}
\newtheorem{remark}[lemma]{Remark}
\newtheorem{conjecture}[lemma]{Conjecture}
\newtheorem*{acknowledgments*}{ACKNOWLEDGMENTS}
\newtheorem{definition}[lemma]{Definition}
\newtheorem*{conj*}{Conjecture}
\newtheorem*{thm*}{Main Theorem}
\newtheorem*{remark*}{Remark}
\newtheorem*{lemma*}{Lemma}
\newtheorem*{example*}{[Example]}
\begin{document}
\begin{center}
{\Large \bf A relation between $m_{G, N}$ and the Euler characteristic of the nerve space
of some class poset of $G$
}\\

\bigskip
\end{center}

\begin{center}
Heguo Liu$^{1}$, Xingzhong Xu$^{\ast, 1, 2}$
, Jiping Zhang$^{3}$

\end{center}

\footnotetext {$^{*}$~~Date: 27/01/2017.
}
\footnotetext {
1. Department of Mathematics, Hubei University, Wuhan, 430062, China

2. Departament de Matem$\mathrm{\grave{a}}$tiques, Universitat Aut$\mathrm{\grave{o}}$noma de Barcelona, E-08193 Bellaterra,
Spain

3. School of Mathematical Sciences, Peking University, Beijing, 100871, China

Heguo Liu's E-mail: ghliu@hubu.edu.cn

Xingzhong Xu's E-mail: xuxingzhong407@mat.uab.cat, xuxingzhong407@126.com

$\ast$ Corresponding author

Jiping Zhang's E-mail: jzhang@pku.edu.cn

Supported by National 973 Project (2011CB808003) and NSFC grant (11371124, 11501183).
}

\begin{center}
\footnotesize{\emph{
}}\\
\end{center}

\title{}
\def\abstractname{\textbf{Abstract}}

\begin{abstract}\addcontentsline{toc}{section}{\bf{English Abstract}}
Let $G$ be a finite group and $N\unlhd G$ with $|G: N|=p$ for some prime $p$.
In this note, to compute $m_{G,N}$ directly, we construct a class poset
$\mathfrak{T}_{C}(G)$ of $G$ for some cyclic subgroup $C$.
And we find a relation between $m_{G,N}$ and the Euler characteristic of the nerve space
$|N(\mathfrak{T}_{C}(G))|$ (see the Theorem 1.3).  As an application, we compute $m_{S_5, A_5}=0$ directly, and get $S_5$
is a $B$-group.

\hfil\break

\textbf{Key Words:} $B$-group; nerve space; poset of group.
\hfil\break \textbf{2000 Mathematics Subject
Classification:} \ 18B99 $\cdot$ \ 19A22 $\cdot$ \ 20J15

\end{abstract}

\maketitle

\section{\bf Introduction}

In \cite{Bo1}, Bouc proposed the following conjecture:
\begin{conjecture}\cite[Conjecture A]{Bo1} Let $G$ be a finite group. Then $\beta(G)$ is nilpotent if
and only if $G$ is nilpotent.
\end{conjecture}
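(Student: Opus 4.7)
The plan is to tackle the two directions of the biconditional separately. The forward direction, that nilpotency of $G$ forces nilpotency of $\beta(G)$, is immediate: by construction $\beta(G)$ is a quotient of $G$ (the largest $B$-group quotient), and quotients of nilpotent groups are nilpotent. Hence the entire content of the conjecture lies in the converse: assuming $\beta(G)$ is nilpotent, one must deduce that $G$ itself is nilpotent.

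For the converse I would argue by contradiction with a counterexample of minimal order. A short check shows that $\beta$ behaves well under quotients, in the sense that $\beta(G/N)$ is a quotient of $\beta(G)$ for every $N\trianglelefteq G$, because any surjection $G\twoheadrightarrow H$ with $H$ a $B$-group factors through $\beta(G)$, and the composite $G\twoheadrightarrow G/N\twoheadrightarrow \beta(G/N)$ is such a surjection. Combined with minimality of $|G|$, this forces every proper quotient $G/N$ (with $N\neq 1$) to be nilpotent. Therefore $G$ is a minimal non-nilpotent group, whose structure is classified by Schmidt and Iwasawa: $G=P\rtimes Q$ with $P$ a normal Sylow $p$-subgroup, $Q$ a cyclic Sylow $q$-subgroup for a prime $q\neq p$, and every proper subgroup of $G$ nilpotent. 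The problem is thereby reduced to producing, for each such $G$, a non-nilpotent $B$-group quotient.

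The natural instrument for the last step is the main Theorem 1.3 announced in the abstract, which expresses $m_{G,N}$ (up to sign) as the Euler characteristic of the nerve $|N(\mathfrak{T}_{C}(G))|$ of a class poset built from a suitable cyclic subgroup $C$. Taking the model computation $m_{S_5,A_5}=0$ of the introduction as a template, I would try to show that every minimal non-nilpotent $G=P\rtimes Q$ admits a non-trivial normal subgroup $N$ with $m_{G,N}=0$ and with $G/N$ still non-nilpotent; iterating this descent would produce a non-nilpotent $B$-group quotient of $G$, contradicting nilpotency of $\beta(G)$. The main obstacle, and indeed what keeps this conjecture open in general, will be the simultaneous control of the Euler characteristics $\chi(|N(\mathfrak{T}_{C}(G))|)$ as $N$ and $C$ vary: the homotopy type of $\mathfrak{T}_{C}(G)$ depends rather delicately on the $Q$-module structure of $P/\Phi(P)$ and on the prime pair $(p,q)$, so a uniform argument across the whole Schmidt--Iwasawa family will likely require either a new combinatorial input (e.g.\ a shellability or contractibility statement for $\mathfrak{T}_{C}(G)$ in this context) or a case analysis on $(p,q)$ that one hopes the Euler characteristic formula renders tractable.
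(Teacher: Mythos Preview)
The statement you are attempting to prove is Conjecture~1.1, and the paper does \emph{not} prove it: it is quoted from \cite{Bo1} as an open problem. The paper only records the known partial results (Bouc's solvable case, Theorem~2.12, and Baumann's $p$-hypo-elementary case, Theorem~2.11), develops the Euler-characteristic formula of Theorem~1.3, and uses it to verify the single instance $m_{S_5,A_5}=0$. So there is no ``paper's own proof'' to compare with.

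Independently of that, your reduction contains a genuine gap. From minimality you correctly deduce that every proper \emph{quotient} $G/N$ with $N\neq 1$ is nilpotent. You then assert that $G$ is a Schmidt--Iwasawa group $P\rtimes Q$. But the Schmidt--Iwasawa classification concerns groups all of whose proper \emph{subgroups} are nilpotent; having all proper quotients nilpotent is a different condition and does not force $G$ to be of that shape. Any non-abelian simple group, or more generally any non-nilpotent $G$ with a unique minimal normal subgroup $N$ and $G/N$ nilpotent, satisfies your quotient condition without being a Schmidt group. In fact, if your reduction \emph{were} valid, the conjecture would follow at once: Schmidt groups are solvable, and Bouc has already settled the solvable case (Theorem~2.12). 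The whole difficulty of the conjecture lives in the non-solvable minimal counterexamples that your argument has inadvertently excluded.

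There is also an internal inconsistency in the final step. Having established that every proper quotient of the minimal counterexample $G$ is nilpotent, you then propose to find $N\trianglelefteq G$ with $m_{G,N}=0$ and $G/N$ \emph{non}-nilpotent. No such $N$ can exist by what you just proved. What one actually needs in a minimal counterexample is to show $m_{G,N}\neq 0$ for some $N$ with $G/N$ nilpotent but $G/N\not\cong\beta(G)$, or equivalently to show directly that $\beta(G)$ cannot be nilpotent; the Euler-characteristic formula may help here, but the paper does not claim to carry this out beyond $S_5$.
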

Here, $\beta(G)$ a largest quotient of a finite group which is a $B$-group and the definition of $B$-group
can be found in \cite{Bo1, Bo2} or in the Section 2.
Bouc has proven the Conjecture 1.1  under the additional assumption that finite group $G$ is solvable in \cite{Bo1}.
In \cite{XZ}, Xu and Zhang consider some special cases when the finite group $G$ is not solvable. But this result relies on
the proposition of Baumann \cite{Ba}, and his proposition relies on the Conlon theorem \cite[(80.51)]{CR}.
If we want to generalize the result of \cite{XZ}, we need use the new method to compute $m_{G, N}$ directly. Here, $N$
is a normal subgroup of $G$. And the definition of $m_{G,N}$ can be fined in \cite{Bo1,Bo2} or in the Section 2.

For this aim, some attempts have been done in the paper.
We begin to compute $m_{G,N}$ when $|G:N|=p$ for some prime number.
But it is not easy to compute $m_{G,N}$ directly even if  $|G:N|=p$. When $|G:N|=p$, we have the following observation:
\begin{eqnarray*}
&~&m_{G,N}+\frac{1}{|G|}\sum_{X\leq N} |X|\mu(X, G)\\
&=&\frac{1}{|G|}\sum_{XN= G, X\leq G} |X|\mu(X, G)+\frac{1}{|G|}\sum_{X\leq N} |X|\mu(X, G)\\
&=&\frac{1}{|G|}\sum_{XN= G, X\leq G} |X|\mu(X, G)+\frac{1}{|G|}\sum_{XN\neq G, X\leq G} |X|\mu(X, G)\\
&=&\frac{1}{|G|}\sum_{X\leq G} |X|\mu(X, G)\\
&=&m_{G,G}=0, \mathrm{if}~ G \mathrm{~is ~not~ cyclic}.
\end{eqnarray*}
So to compute $m_{G,N}$, we can compute $\frac{1}{|G|}\sum_{X\leq N} |X|\mu(X, G)$ first.
And we find there is a formula (see the proof of the Proposition 4.1) about
\begin{eqnarray*}
&~&\frac{1}{|G|}\sum_{X\leq N} |X|\mu(X, G)+\frac{1}{|G|}\sum_{X\leq N} |X|\mu(X, N)\\
&(=&\frac{1}{|G|}\sum_{X\leq N} |X|\mu(X, G)+m_{N,N}).
\end{eqnarray*}
Now, we set
$$m_{G,N}':=\frac{1}{|G|}\sum_{XN\neq G, X\leq G} |X|\mu(X, G)=\frac{1}{|G|}\sum_{X\leq N} |X|\mu(X, G);$$
and set
$$M_{G,N}':=\sum_{X\leq N} |X|\mu(X, G)=|G|m_{G,N}'.$$
We get the following theorem about $M_{G,N}'$.

\begin{proposition}Let $G$ be a finite group and  $N\unlhd G$ such that $|G:N|=p$ for some prime number $p$.
Then
$$M_{G,N}'=-\sum_{C\leq N, ~C~ \mathrm{is~ cyclic}}\varphi(|C|)-\sum_{Y\lneq G,Y\nleq N}M'_{Y, Y\cap N}.$$
Here, $\varphi$ is the Euler totient function.
\end{proposition}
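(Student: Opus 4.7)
The plan is to reduce the identity to a direct manipulation of the defining recursion of the Möbius function. First I would replace the first sum on the right-hand side using the classical identity
$$\sum_{C\leq N,\,C\text{ cyclic}}\varphi(|C|)=|N|,$$
which holds because every element of $N$ generates a unique cyclic subgroup and $\varphi(|C|)$ counts its generators. It therefore suffices to prove
$$M'_{G,N}+|N|+\sum_{Y\lneq G,\,Y\nleq N}M'_{Y,Y\cap N}=0.$$

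The main step is to rewrite $\mu(X,G)$ for $X\leq N$. Since $X\leq N\lneq G$ we have $X\neq G$, so the defining property of the Möbius function gives $\sum_{X\leq Y\leq G}\mu(X,Y)=0$. I would partition the index set $\{Y:X\leq Y\leq G\}$ into three pieces: (i) $Y\leq N$, (ii) $Y=G$, and (iii) $Y\lneq G$ with $Y\nleq N$. The first piece sums to $\delta_{X,N}$ (by the same defining property applied inside $N$), the second contributes $\mu(X,G)$, and the third is the remainder. Solving yields
$$\mu(X,G)=-\delta_{X,N}-\sum_{\substack{X\leq Y\lneq G\\ Y\nleq N}}\mu(X,Y).$$

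Multiplying both sides by $|X|$ and summing over $X\leq N$, the left side becomes $M'_{G,N}$. On the right, the $\delta_{X,N}$ term collapses to $-|N|$ (only $X=N$ survives, contributing $-|N|$). For the remaining double sum I would swap the order of summation: for each fixed $Y\lneq G$ with $Y\nleq N$, the combined condition ``$X\leq N$ and $X\leq Y$'' is precisely $X\leq Y\cap N$; and because $Y\nleq N$ together with $|G:N|=p$ forces $YN=G$ and hence $|Y:Y\cap N|=p$, the inner sum is exactly $M'_{Y,Y\cap N}$ in the paper's notation. Collecting terms gives the stated identity.

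I expect no substantial obstacle: the argument is essentially one application of the defining relation for $\mu$ followed by a swap of summation. The only points that require a moment of care are verifying $|Y:Y\cap N|=p$ for every $Y$ appearing in the second sum (so that $M'_{Y,Y\cap N}$ is defined as in the proposition) and correctly extracting the $X=N$ contribution from the $\delta_{X,N}$ term.
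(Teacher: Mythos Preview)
Your argument is correct and is in fact more streamlined than the paper's. The paper proceeds in two stages: first (Proposition 4.1) it sets up the zeta matrix of the subgroup lattice in $2\times 2$ block form, inverts it to obtain the relation $\mu(X,N)+\mu(X,G)=-\sum_{Y\lneq G,\,Y\neq N}\mu(X,Y)$ for $X\lneq N$, and then invokes Lemma 3.4 (that $\mu(A,B)=0$ when $A\nleq B$) to arrive at $M'_{G,N}=-\sum_{Y\lneq G}\sum_{X\leq N\cap Y}|X|\mu(X,Y)$; second (Proposition 4.2) it splits the outer sum according to whether $Y\leq N$ or $Y\nleq N$, evaluating the first part via $m_{Y,Y}$ and recognising the second as $M'_{Y,Y\cap N}$. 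Your route reaches the same conclusion in a single pass: you apply the defining recursion $\sum_{X\leq Y\leq G}\mu(X,Y)=0$ directly and partition the $Y$'s into the three ranges $Y\leq N$, $Y=G$, $Y\lneq G$ with $Y\nleq N$. This avoids the matrix manipulation entirely and renders Lemma 3.4 unnecessary (that lemma is in any case the standard fact that the M\"obius function of a poset vanishes on non-comparable pairs). The paper's decomposition has the minor advantage of producing the intermediate formula of Proposition 4.1, which it later reuses, but for the statement at hand your argument is shorter and more transparent.
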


To compute $M_{G,N}'$, we need compute $M_{Y, Y\cap N}'$ for every $Y\lneq G$. Since $Y\lneq G$,
thus we can get $M_{G,N}'$ by finite steps. Here, we define a new class poset of subgroups of $G$ as following:
 Let $C$ be a cyclic subgroup of $N$, define
$$\mathfrak{T}_{C}(G):=\{X|C\leq X\lneq G, X\nleq N\}.$$
We can see that $\mathfrak{T}_{C}(G)$ is a poset ordered by inclusion. We can consider poset $\mathfrak{T}_{C}(G)$
as a category with one morphism $Y\rightarrow Z$ if $Y$ is a subgroup of $Z$. We set $N(\mathfrak{T}_{C}(G))$
is the nerve of the category $\mathfrak{T}_{C}(G)$ and $|N(\mathfrak{T}_{C}(G))|$ is  the geometric realization
of $N(\mathfrak{T}_{C}(G))$.  Then we get a following computation about $m_{G,N}$.

\begin{theorem}Let $G$ be a finite group and $G$ not cyclic. Let $N\unlhd G$ such that $|G:N|=p$ for some prime number $p$.
Then
\begin{eqnarray*}
m_{G,N}
&=&\frac{1}{|G|}(\sum_{C\leq N, ~C~ \mathrm{is~ cyclic}}(1-\chi(|N(\mathfrak{T}_{C}(G))|)\cdot \varphi(|C|))).
\end{eqnarray*}
Here, $|N(\mathfrak{T}_{C}(G))|$ is a simplicial complex associated to the poset $\mathfrak{T}_{C}(G)$, and
$\chi(|N(\mathfrak{T}_{C}(G))|)$ is the Euler characteristic of the space $|N(\mathfrak{T}_{C}(G))|$.
\end{theorem}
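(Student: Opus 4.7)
The plan is to reduce to $M'_{G,N}$ via the observation from the introduction and then iterate Proposition 1.2 until the recursion bottoms out. Since $G$ is not cyclic, the displayed computation in Section~1 gives $m_{G,N}=-m'_{G,N}=-M'_{G,N}/|G|$, so it suffices to prove
\begin{equation*}
-M'_{G,N}=\sum_{\substack{C\leq N\\ C\ \text{cyclic}}}\bigl(1-\chi(|N(\mathfrak{T}_C(G))|)\bigr)\varphi(|C|).
\end{equation*}

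At every layer of the iteration, each term $M'_{Y,Y\cap N}$ appearing (with $Y\lneq G,\ Y\nleq N$, hence $|Y:Y\cap N|=p$) is replaced using Proposition 1.2 applied to $(Y,Y\cap N)$:
\begin{equation*}
M'_{Y,Y\cap N}=-\sum_{\substack{C\leq Y\cap N\\ C\ \text{cyclic}}}\varphi(|C|)\ -\sum_{\substack{Z\lneq Y\\ Z\nleq Y\cap N}}M'_{Z,Z\cap Y\cap N}.
\end{equation*}
Since $Z\leq Y$, one has $Z\cap Y\cap N=Z\cap N$, and $Z\nleq Y\cap N$ is equivalent to $Z\nleq N$; moreover $|Z:Z\cap N|=p$, so the recursion stays inside the hypotheses of Proposition 1.2 layer by layer, and terminates because $|Y|$ strictly decreases. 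Tracking the two minus signs per layer and consolidating yields
\begin{equation*}
-M'_{G,N}=\sum_{k\geq 0}(-1)^k\sum_{\substack{G\supsetneq Y_1\supsetneq\cdots\supsetneq Y_k\\ Y_i\nleq N\ \text{for all}\ i}}\sum_{\substack{C\leq Y_k\cap N\\ C\ \text{cyclic}}}\varphi(|C|),
\end{equation*}
with the convention $Y_0:=G$ so the $k=0$ term is $\sum_{C\leq N,\ C\ \text{cyclic}}\varphi(|C|)$.

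The last step interchanges summation and identifies the chain count with an Euler characteristic. For fixed cyclic $C\leq N$ the condition $C\leq Y_k\cap N$ reduces to $C\leq Y_k$ (since $C\leq N$), which together with the chain forces $C\leq Y_i$ for every $i$; combined with $Y_i\lneq G$ and $Y_i\nleq N$ this says precisely that $Y_1>\cdots>Y_k$ is a strict chain in the poset $\mathfrak{T}_C(G)=\{X:C\leq X\lneq G,\ X\nleq N\}$. Writing $c_k(C)$ for the number of such strict chains of length $k$ (with $c_0(C):=1$ for the empty chain), and using that the $j$-simplices of $N(\mathfrak{T}_C(G))$ are strict chains of $j+1$ elements, one obtains
\begin{equation*}
\chi(|N(\mathfrak{T}_C(G))|)=\sum_{j\geq 0}(-1)^j c_{j+1}(C),\qquad \sum_{k\geq 0}(-1)^k c_k(C)=1-\chi(|N(\mathfrak{T}_C(G))|),
\end{equation*}
and substituting produces the claimed formula.

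The main obstacle is the bookkeeping in the iteration step: one must verify rigorously that after $k$ substitutions the chain constraints collapse to the clean form $G\supsetneq Y_1\supsetneq\cdots\supsetneq Y_k$ with $Y_i\nleq N$ (rather than nested conditions like $Y_i\nleq Y_{i-1}\cap N$), since this is exactly what is needed to identify the iterated sum with chains in $\mathfrak{T}_C(G)$. The alternating sign pattern, while conceptually clear (one sign from each application of Proposition~1.2), also requires careful checking to guarantee the $(-1)^k$ pattern stated above.
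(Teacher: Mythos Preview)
Your proposal is correct and follows essentially the same route as the paper: the paper's proof of the theorem simply invokes the identity $m_{G,N}+m'_{G,N}=m_{G,G}=0$ and Proposition~5.4, whose proof is precisely the iteration of Proposition~1.2 (Proposition~4.2) that you carry out, unwinding the recursion into an alternating sum over strict chains in $\mathfrak{T}_C(G)$ and identifying this with $1-\chi(|N(\mathfrak{T}_C(G))|)$. Your verification that the nested conditions $Y_i\nleq Y_{i-1}\cap N$ collapse to $Y_i\nleq N$ and that $Y_i\cap Y_{i-1}\cap N=Y_i\cap N$ is exactly the bookkeeping the paper sketches more briefly.
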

\begin{proof} Since $m_{G,N}+m_{G,N}'=m_{G,G}=0$ when $G$ is not cyclic, thus
we prove this theorem by using the Proposition 5.4
\end{proof}

By using the method of the Theorem 1.2-3, we compute $m_{S_5, A_5}$ directly, and we get the following result.

\begin{proposition} $S_5$ is a $B$-group.
\end{proposition}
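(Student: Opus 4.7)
The plan is to verify that $S_5$ is a $B$-group by showing that $m_{S_5,N}=0$ for every non-trivial normal subgroup $N\unlhd S_5$. Since the only non-trivial proper normal subgroup of $S_5$ is $A_5$, and since $m_{S_5,S_5}=0$ automatically ($S_5$ being non-cyclic, by the identity derived in the Introduction), the whole statement reduces to the single identity $m_{S_5,A_5}=0$. As $|S_5:A_5|=2$ is prime, Theorem 1.3 applies and turns the question into the topological computation
\begin{equation*}
m_{S_5,A_5}=\frac{1}{120}\sum_{\substack{C\leq A_5\\ C\text{ cyclic}}}\bigl(1-\chi(|N(\mathfrak{T}_C(S_5))|)\bigr)\varphi(|C|).
\end{equation*}

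I would then partition the cyclic subgroups of $A_5$ into $S_5$-conjugacy classes, since conjugation by an element of $S_5$ carries $\mathfrak{T}_C(S_5)$ isomorphically onto $\mathfrak{T}_{C^g}(S_5)$ and therefore preserves the Euler characteristic. There are four classes: the trivial subgroup; the $15$ order-$2$ subgroups generated by double transpositions; the $10$ order-$3$ subgroups generated by $3$-cycles; and the $6$ order-$5$ subgroups generated by $5$-cycles. The $5$-cycle case is immediate: the overgroups of $C=\langle(12345)\rangle$ in $S_5$ are exactly $C$, $D_{10}$, $F_{20}$, $A_5$ and $S_5$, and after discarding the ones contained in $A_5$ and discarding $S_5$ itself, the poset $\mathfrak{T}_C(S_5)$ consists of the single point $F_{20}$, so $\chi=1$ and this class contributes zero.

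For the order-$2$ and order-$3$ classes I would list every proper overgroup of a representative $C$ in $S_5$ not contained in $A_5$, draw the Hasse diagram and compute $\chi$, either by identifying a contractibility structure (a cone point, a join decomposition, or a Quillen fibre argument) or directly from the face vector of the order complex. For the $3$-cycle $C=\langle(123)\rangle$ the relevant overgroups include $S_{\{1,2,3\}}$, the cyclic $\langle(123)(45)\rangle\cong C_6$, a copy of $S_3$ generated by $(123)$ and $(12)(45)$, the product $\langle(123),(12),(45)\rangle\cong S_3\times C_2$, and the two point-stabilisers $S_{\{1,2,3,4\}}$ and $S_{\{1,2,3,5\}}$; for the double transposition $C=\langle(12)(34)\rangle$ one must in addition handle the Klein four-group $\langle(12),(34)\rangle$, the two cyclic $C_4$'s whose squares equal $C$, the Sylow $D_8$'s, and the relevant $S_4$'s and $F_{20}$'s containing $C$.

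The main obstacle is the case $C=1$, where $\mathfrak{T}_1(S_5)$ is the whole poset of proper subgroups of $S_5$ meeting $S_5\setminus A_5$. Its maximal elements are the $5$ conjugates of $S_4$, the $10$ conjugates of $S_3\times S_2$ and the $6$ conjugates of $F_{20}$, so no unique maximum exists and $\chi$ must be computed from the order complex, possibly after simplifying by Quillen's fibre theorem applied to the inclusion of a well-chosen subposet, or by stratifying according to the intersection $X\cap A_5$. Once all four Euler characteristics are in hand, the final arithmetic check, using $\varphi(1)=1$, $\varphi(2)=1$, $\varphi(3)=2$, $\varphi(5)=4$ together with the class sizes $1,15,10,6$, will give the vanishing $m_{S_5,A_5}=0$, whence $S_5$ is a $B$-group.
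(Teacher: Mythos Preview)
Your reduction to the single identity $m_{S_5,A_5}=0$ and the appeal to Theorem~1.3 match the paper exactly, as does the partition of cyclic $C\le A_5$ into four conjugacy classes; your treatment of the order-$5$ case is also the paper's. The divergence is in how the remaining Euler characteristics are obtained. The paper never computes a face vector or a Hasse diagram. Instead it proves a structural lemma (Proposition~6.1, resting on the closure property of Remark~5.2 that $B\cap D\in\mathfrak{T}_C(G)$ whenever $B,D\in\mathfrak{T}_C(G)$ have a common lower bound there): for these particular posets the order complex is acyclic as soon as it is connected, so $\chi(|N(\mathfrak{T}_C(G))|)=1$ follows from connectedness alone. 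The paper then checks connectedness for one representative of each class (Theorem~7.2), which in every case amounts to exhibiting, for any two maximal overgroups of $C$ not contained in $A_5$, a common subgroup lying in $\mathfrak{T}_C(S_5)$.

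Your direct-computation plan is correct in principle, but it is strictly more work, and precisely at the point you flag as the ``main obstacle'': for $C=1$ the poset $\mathfrak{T}_1(S_5)$ has $5+10+6=21$ maximal elements and a large order complex, whereas under the paper's lemma the $C=1$ case reduces to the one-line observation that these $21$ maximal subgroups can be linked through subgroups containing an odd permutation (any two intransitive maximals share a transposition, and each $F_{20}$ shares a $4$-cycle with some $S_4$). Adopting Proposition~6.1 would collapse your four computations to four short connectedness checks and eliminate the need for Quillen-fibre arguments or face-count arithmetic.
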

In fact, \cite{Bo3} had proved that $S_n$ is a $B$-group. And by using \cite{Ba}, we also get that $S_n$
is a $B$-group when $n\geq 5$.

After recalling the basic definitions and properties of $B$-groups in the Section 2, we prove some
lemmas about M$\mathrm{\ddot{o}}$bius function in the Section 3. And this lemmas will be used
 in the Section 4 to prove the Proposition 1.2.
In the Section 5, we construct a class poset
$\mathfrak{T}_{C}(G)$ of $G$ for some cyclic subgroup of $C$ and prove the Theorem 1.3.
As an application, we compute $m_{S_5, A_5}=0$ and get $S_5$
is a $B$-group in the Section 6.

\section{\bf Burnside rings and $B$-groups}

In this section we collect some known results that will be needed later.  For the background theory of Burnside rings and
$B$-groups, we refer to \cite{Bo1}, \cite{Bo2}.

\begin{definition}\cite[Notation 5.2.2]{Bo2} Let G be a finite group and $N\unlhd G$. Denote by $m_{G,N}$ the rational number defined by:
$$m_{G,N}=\frac{1}{|G|}\sum_{XN=G} |X|\mu(X, G),$$
where$\mu$ is the $M\ddot{o}bius$ function of the poset of subgroups of $G$.
\end{definition}

\begin{remark} If $N=1$, we have
$$m_{G,1}=\frac{1}{|G|}\sum_{X1=G} |X|\mu(X, G)=\frac{1}{|G|}|G|\mu(G, G)=1\neq 0.$$
\end{remark}

\begin{definition}\cite[Definiton 2.2]{Bo1} The finite group $G$ is called a $B$-group if
$m_{G,N}=0$ for any non-trivial normal subgroup $N$ of $G$.
\end{definition}

\begin{proposition}\cite[Proposition 5.4.10]{Bo2} Let $G$ be a finite group. If $N_{1}, N_{2}\unlhd G$
are maximal such that $m_{G,N}\neq 0$, then $G/N_{1}\cong G/N_{2}$.
\end{proposition}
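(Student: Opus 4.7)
The plan is to show that whenever $N\unlhd G$ is maximal with $m_{G,N}\neq 0$ the quotient $G/N$ is a $B$-group, and then to compare two such quotients directly.

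First I would derive a reduction formula rewriting $m_{G/N,L/N}$, for $L\unlhd G$ with $L\supseteq N$, as a $\mathbb{Z}$-linear combination of values $m_{G,M}$ with $M\unlhd G$ and $M\supseteq N$. Starting from Definition~2.1 inside $G/N$ and using the correspondence theorem $\Sub(G/N)\cong[N,G]$, the value $\mu_{G/N}(X/N,G/N)$ must be rewritten on the subgroup lattice of $G$; since M\"obius values do not transport pointwise under a lattice quotient, I would expand $\mu_{G/N}$ as an alternating sum over chains in $[N,G]$ and then sum $\mu_G$ over the fibres of $X\mapsto XN/N$ with the correct signs. I expect this to be the most delicate technical step.

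Granting the reduction formula, the $B$-group property follows at once: if $N$ is maximal with $m_{G,N}\neq 0$, then for every nontrivial $\bar L=L/N\unlhd G/N$ all the $m_{G,M}$ appearing on the right-hand side vanish (each such $M$ strictly contains $N$), so $m_{G/N,\bar L}=0$ and Definition~2.3 applies. For uniqueness, suppose $N_1,N_2$ are both maximal with $m_{G,N_i}\neq 0$ and set $M=N_1\cap N_2$. The two $B$-groups $G/N_1$ and $G/N_2$ are both quotients of $G/M$; applying the reduction formula inside $G/M$ together with the maximality of each $N_i$ rules out any $B$-group quotient of $G$ strictly dominating either $G/N_i$, and a comparison of marks on $[G/N_1]$ and $[G/N_2]$ in $B(G)$ then forces the two point-stabilizer conjugacy classes to coincide, yielding $G/N_1\cong G/N_2$.

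The principal obstacle is the reduction formula itself: the correct combinatorial coefficients are not obvious a priori. I would verify it first on a small case such as $G=S_3$, $N=A_3$ (where $m_{S_3,A_3}=0$ and $S_3/A_3\cong C_2$ is a $B$-group) to pin down those coefficients before attempting the general proof.
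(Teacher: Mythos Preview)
The paper does not prove this proposition at all: it is quoted verbatim from Bouc's monograph \cite[Proposition~5.4.10]{Bo2} and used as background. So there is no proof in the paper to compare your proposal against.

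That said, your plan has one detour and one real gap. The ``reduction formula'' you propose to derive by expanding $\mu_{G/N}$ over chains is unnecessary: the clean identity you need is the multiplicative one already recorded in the paper as Proposition~2.10, namely $m_{G,M}=m_{G,N}\,m_{G/N,M/N}$ for $N\leq M$ normal in $G$. With this in hand your first step is immediate: if $N$ is maximal with $m_{G,N}\neq 0$ and $M\supsetneq N$ is normal, then $m_{G,M}=0$, and dividing by $m_{G,N}\neq 0$ gives $m_{G/N,M/N}=0$, so $G/N$ is a $B$-group. There is no need to unravel M\"obius functions across the quotient lattice.

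The genuine gap is in your uniqueness argument. Saying that ``a comparison of marks on $[G/N_1]$ and $[G/N_2]$ in $B(G)$ forces the two point-stabilizer conjugacy classes to coincide'' is not a proof: equality of mark vectors would force $N_1$ and $N_2$ to be conjugate, hence equal (they are normal), and that is stronger than the conclusion and in general false. The statement only asserts $G/N_1\cong G/N_2$, not $N_1=N_2$. Bouc's actual argument does not go through marks of transitive $G$-sets; it uses the idempotent $e_G^G\in\mathbb{Q}B(G)$ and its image under deflation to $G/N$, together with the multiplicative formula above, to identify $\beta(G)$ intrinsically. If you want a self-contained route, you must explain why each $B$-group quotient $G/N_i$ is isomorphic to a quotient of the other (this is essentially the content of Theorem~2.6(2), whose proof in \cite{Bo2} precedes the definition of $\beta$), and then conclude by comparing orders. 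Your current sketch does not supply that step. Also, your proposed test case $G=S_3$, $N=A_3$ is not informative here, since $m_{S_3,A_3}=0$; the only normal subgroup of $S_3$ with nonzero $m$ is the trivial one.
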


\begin{definition}\cite[Notation 2.3]{Bo1} When $G$ is a finite group, and $N\unlhd G$
is maximal such that $m_{G,N}\neq 0$, set $\beta(G)=G/N.$
\end{definition}

\begin{theorem}\cite[Theorem 5.4.11]{Bo2} Let $G$ be a finite group.

1. $\beta(G)$ is a $B$-group.

2. If a $B$-group $H$ is isomorphic to a quotient of $G$, then $H$ is isomorphic to a quotient of $\beta(G)$.

3. Let $M\unlhd G$. The following conditions are equivalent:

\quad\quad (a) $m_{G,N}\neq 0$.

\quad\quad (b) The group $\beta(G)$ is isomorphic to a quotient of $G/M$.

\quad\quad (c) $\beta(G)\cong \beta(G/N)$.
\end{theorem}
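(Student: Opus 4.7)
The plan is to base the whole theorem on a single multiplicativity identity for the numbers $m_{G,N}$, combined with Proposition~2.4 already available in the excerpt. The crucial identity, whose proof is the main technical obstacle, is
\begin{equation*}
m_{G,N}\;=\;m_{G,M}\cdot m_{G/M,\,N/M}\qquad\text{whenever } M\leq N,\ M,N\unlhd G. \tag{$\star$}
\end{equation*}
One way to establish $(\star)$ is to expand both sides from Definition~2.1 and reorganise the left-hand sum by grouping subgroups $X\leq G$ with $XN=G$ according to the intermediate subgroup $Y:=XM$; using that $M\leq N$ forces $XN=G \iff YN=G$, the inner sums over $X$ with $XM=Y$ can be recognised as rescalings of $m_{Y,M}$-type quantities, and the remaining outer sum reduces to $m_{G/M,N/M}$. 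An alternative route is via the compatibility of inflation along $G\twoheadrightarrow G/M$ with the primitive idempotents of the rational Burnside ring, where $m_{G,N}$ appears as a specific coefficient.

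Granting $(\star)$, Part~(1) is immediate: if $N$ is maximal with $m_{G,N}\neq 0$ (so $\beta(G)=G/N$) and $\bar K=K/N$ is any non-trivial normal subgroup of $G/N$, then $K\supsetneq N$ forces $m_{G,K}=0$, and $(\star)$ gives $m_{G,N}\cdot m_{G/N,K/N}=0$, hence $m_{G/N,K/N}=0$. For Part~(3), the implication (a)$\Rightarrow$(c) follows by choosing $N\supseteq M$ maximal with $m_{G,N}\neq 0$ (possible since $M$ itself witnesses the condition), identifying $G/N\cong\beta(G)$ via Proposition~2.4, and then using $(\star)$ both to guarantee $m_{G/M,N/M}\neq 0$ and to transfer the maximality of $N$ in $G$ to maximality of $N/M$ in $G/M$. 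The implication (c)$\Rightarrow$(b) is immediate from the definition of $\beta$, and (b)$\Rightarrow$(a) follows by lifting $\beta(G)\cong G/K$ with $K\supseteq M$ (so $m_{G,K}\neq 0$ by Proposition~2.4) and reading off $m_{G,M}\neq 0$ from $(\star)$.

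Part~(2) is the most delicate. Given a $B$-group $H\cong G/M$, Remark~2.2 combined with the $B$-group hypothesis forces $\beta(H)=H$, so realising $H$ as a quotient of $\beta(G)$ reduces by Part~(3)(a)$\Leftrightarrow$(b) to showing $m_{G,M}\neq 0$. I would argue by a case analysis on how $M$ interacts with a chosen maximal $N_0$ defining $\beta(G)$: if $M\supseteq N_0$ the claim follows directly, and otherwise the intersection $M\cap N_0$ is a proper subgroup of $N_0$ with $m_{G,M\cap N_0}\neq 0$ (from $(\star)$ applied to $M\cap N_0\leq N_0$). Passing to $G/(M\cap N_0)$, Part~(3) identifies $\beta$ of this quotient with $\beta(G)$, and the $B$-group structure of $H$, viewed as a further quotient of $G/(M\cap N_0)$, combined with Part~(1) leads to a contradiction with the assumption $M\not\supseteq N_0$. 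The case analysis is routine but fiddly; the substantive content of the whole theorem lies in $(\star)$.
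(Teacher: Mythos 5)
The paper offers no proof of this statement at all: it is quoted verbatim from \cite[Theorem 5.4.11]{Bo2}, so there is nothing in-paper to compare your argument with. On your own argument: the identity $(\star)$ that you single out as ``the main technical obstacle'' is not an obstacle here, since it is already recorded in the excerpt as Proposition 2.10 (namely \cite[Proposition 5.3.1]{Bo2}, with the roles of $M$ and $N$ swapped); you could simply cite it rather than sketch a regrouping argument that, as written (``rescalings of $m_{Y,M}$-type quantities''), is too vague to check. Granting $(\star)$ and Proposition 2.4, your proofs of part 1 and of (a)$\Rightarrow$(c)$\Rightarrow$(b) in part 3 are correct.

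The two substantive parts are where the gaps lie. For (b)$\Rightarrow$(a) you write ``$\beta(G)\cong G/K$ with $K\supseteq M$, so $m_{G,K}\neq 0$ by Proposition 2.4.'' Proposition 2.4 does not say this: it asserts only that any two normal subgroups that are \emph{maximal} subject to $m_{G,N}\neq 0$ have isomorphic quotients. It gives no way to recognize, from the abstract isomorphism $G/K\cong\beta(G)$ alone, that $m_{G,K}\neq 0$; that recognition statement is essentially the entire content of (b)$\Rightarrow$(a), and Bouc proves it with additional input (the idempotent identity behind $m_{G,N}$ and assertion 2), not with $(\star)$ and Proposition 2.4 alone. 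For part 2, your reduction to ``show $m_{G,M}\neq 0$ whenever $G/M$ is a $B$-group'' proves something false: take $M=G$ with $G$ non-cyclic, so $H=G/G=1$ is (vacuously) a $B$-group and is certainly a quotient of $\beta(G)$, yet $m_{G,G}=0$ by Proposition 2.8. The point is that $H$ need only be a quotient of $\beta(G)$ somehow, not via the particular $M$ presenting it as a quotient of $G$, so the conclusion of part 2 does not force $m_{G,M}\neq 0$. Finally, the closing step of your case analysis (``leads to a contradiction with the assumption $M\not\supseteq N_0$'') is an expectation, not an argument. As it stands, parts 2 and 3(b)$\Rightarrow$(a) remain unproven.
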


We collect some properties of $m_{G,N}$ that will be needed later.

\begin{proposition}\cite[Proposition 2.5]{Bo1} Let $G$ be a finite group.
Then $G$ is a $B$-group if and only if $m_{G,N}=0$ for any minimal (non-trivial) normal subgroup of $G$.
\end{proposition}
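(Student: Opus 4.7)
The forward implication is immediate from Definition~2.3: a minimal non-trivial normal subgroup is in particular a non-trivial normal subgroup, so the $B$-group condition trivially restricts to it. My plan therefore focuses on the converse, and I will argue it by contradiction using only the abstract properties of the operator $\beta$ recorded in Theorem~2.6, without touching the M\"obius function directly.

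Suppose $m_{G,N}=0$ for every minimal non-trivial normal subgroup $N\unlhd G$, but some non-trivial $M\unlhd G$ has $m_{G,M}\neq 0$. The first observation is that $M$ itself is not minimal (otherwise it would contradict the hypothesis), so one can fix a minimal non-trivial normal subgroup $N$ with $N\lneq M$. The strategy is then to play Theorem~2.6 off itself at the two groups $G$ and $G/N$ to show that $\beta(G)\cong \beta(G/N)$, which by Theorem~2.6(3) forces $m_{G,N}\neq 0$ and delivers the contradiction.

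To carry out that strategy, I would first use $m_{G,M}\neq 0$ and Theorem~2.6(3)(c) to get $\beta(G)\cong \beta(G/M)$. Now $\beta(G/M)$ is a $B$-group which, via the canonical surjection $G/N\twoheadrightarrow G/M$, is isomorphic to a quotient of $G/N$; so Theorem~2.6(2) applied to the group $G/N$ realises it as a quotient of $\beta(G/N)$. In particular $\beta(G)$ is isomorphic to a quotient of $\beta(G/N)$. Symmetrically, $\beta(G/N)$ is a $B$-group isomorphic to a quotient of $G$, so Theorem~2.6(2) applied to $G$ realises it as a quotient of $\beta(G)$. Two finite groups each isomorphic to a quotient of the other must be isomorphic, whence $\beta(G)\cong \beta(G/N)$ as required.

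The only place where one must be careful is the last paragraph: Theorem~2.6(2) must be invoked once with the group $G$ and once with the group $G/N$, with the roles of the two $B$-groups exchanged, and one has to remember the elementary fact that mutual surjections between finite groups force an isomorphism. I do not expect any further obstacle; no computation with the poset of subgroups of $G$ or with Definition~2.1 is required, since the whole argument is formal and uses Theorem~2.6 as a black box.
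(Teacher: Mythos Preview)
Your argument is correct. The forward implication is indeed trivial, and your contradiction argument for the converse goes through exactly as you describe: the two invocations of Theorem~2.6(2) and the ``mutual quotient'' trick are valid for finite groups.

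Note, however, that the paper does not supply its own proof of this proposition; it merely quotes it from \cite{Bo1}. The standard (and shortest) proof uses Proposition~2.10 instead of Theorem~2.6: given any non-trivial $M\unlhd G$, pick a minimal non-trivial normal subgroup $N$ of $G$ with $N\leq M$; then $m_{G,M}=m_{G,N}\cdot m_{G/N,\,M/N}=0$. One line, no $\beta$. Even staying inside your Theorem~2.6 framework there is a shortcut: from $m_{G,M}\neq 0$ and part~(3)(b) you get that $\beta(G)$ is a quotient of $G/M$, hence of $G/N$ (since $N\leq M$), and then (3)(b)$\Rightarrow$(3)(a) immediately yields $m_{G,N}\neq 0$ --- no need for the symmetric ``each is a quotient of the other'' step. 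Your route works, but both alternatives are shorter and avoid the mild subtlety of mutual surjections.
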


\begin{proposition}\cite[Proposition 5.6.1]{Bo2} Let $G$ be a finite group. Then
$m_{G,G}=0$ if and only if $G$ is not cyclic.
If $P$ be cyclic of order $p$ and $p$ be a prime number,
then $m_{P, P}=\frac{p-1}{p}$.
\end{proposition}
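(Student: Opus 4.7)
The plan is to reduce $m_{G,G}$ to a Möbius-inversion identity counting generators. First observe that when $N=G$, the defining condition $XN=G$ in Definition 2.1 is automatic for every subgroup $X\leq G$, so
\[
m_{G,G}=\frac{1}{|G|}\sum_{X\leq G}|X|\mu(X,G).
\]
The goal is then to identify the inner sum with the number of generators of $G$.

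Next I would introduce the function $\psi$ on the subgroup lattice of $G$ defined by $\psi(K)=\varphi(|K|)$ when $K$ is cyclic and $\psi(K)=0$ otherwise, so $\psi(K)$ counts elements $g\in K$ with $\langle g\rangle=K$ (for non-cyclic $K$ there are none). For any $H\leq G$, each element $h\in H$ generates a unique cyclic subgroup of $H$, which yields the key partition identity
\[
|H|=\sum_{K\leq H}\psi(K).
\]
Applying Möbius inversion on the poset of subgroups of $G$ to this identity (evaluated at $H=G$ and below) gives
\[
\psi(G)=\sum_{X\leq G}\mu(X,G)\,|X|,
\]
and therefore $m_{G,G}=\psi(G)/|G|$.

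From this identification both claims follow at once. If $G$ is not cyclic, then $\psi(G)=0$ by definition, so $m_{G,G}=0$. Conversely, if $G$ is cyclic of order $n\geq 1$, then $\psi(G)=\varphi(n)\geq 1$, so $m_{G,G}=\varphi(n)/n\neq 0$; in particular for a cyclic group $P$ of prime order $p$ we obtain $m_{P,P}=\varphi(p)/p=(p-1)/p$.

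The only non-routine step is justifying the Möbius-inversion move: one must be sure to apply the identity $|H|=\sum_{K\leq H}\psi(K)$ for \emph{every} $H\leq G$, not merely for $H=G$, because the Möbius function $\mu(\,\cdot\,,G)$ on the subgroup lattice inverts sums taken over lower sets. Once this is in place, the rest is bookkeeping and the formula $m_{P,P}=(p-1)/p$ is immediate.
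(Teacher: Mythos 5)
Your argument is correct. Note, however, that the paper does not prove this statement at all---it is quoted from Bouc \cite[Proposition 5.6.1]{Bo2} as a known result---so there is no internal proof to compare against; your Möbius-inversion of the generator-counting identity $|H|=\sum_{K\leq H}\psi(K)$ is the standard (and essentially Bouc's own) derivation, and it correctly yields $m_{G,G}=\varphi(|G|)/|G|$ for $G$ cyclic and $m_{G,G}=0$ otherwise.
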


\begin{remark} If $G$ is a finite simple group, then $G$ is a $B$-group if and only if
$G$ is not abelian.
\end{remark}


\begin{proposition}\cite[Proposition 5.3.1]{Bo2} Let $G$ be a finite group. If
$M$ and $N$ are normal subgroup of $G$ with $N\leq M$, then
$$m_{G,M}=m_{G,N}m_{G/N, M/N}.$$
\end{proposition}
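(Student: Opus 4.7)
The plan is to prove $m_{G,M}=m_{G,N}\cdot m_{G/N,M/N}$ by a direct combinatorial manipulation of the defining sums. Since $N\le M$, the condition $XM=G$ on a subgroup $X\le G$ is equivalent to $(XN)M=G$, i.e., to $(XN/N)(M/N)=G/N$. I therefore partition the sum $|G|\,m_{G,M}=\sum_{XM=G}|X|\mu(X,G)$ according to the value $Y:=XN$, where $Y$ ranges over subgroups with $N\le Y\le G$ and $YM=G$, obtaining
\[
|G|\,m_{G,M}\;=\;\sum_{\substack{N\le Y\le G\\YM=G}}\;\sum_{\substack{X\le G\\XN=Y}}|X|\,\mu(X,G).
\]

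The technical core of the argument is the pointwise identity
\[
\sum_{\substack{X\le G\\XN=Y}}|X|\,\mu(X,G)\;=\;m_{G,N}\cdot|Y|\cdot\mu(Y/N,\,G/N),\qquad N\le Y\le G,\quad(\star)
\]
which I would first sanity-check on small examples (e.g.\ $G=C_{12}$ with $N=C_2$ or $N=C_3$, where both sides are readily computed and agree) and then prove by Möbius inversion on the interval $[N,Y]\cong\Sub(Y/N)$. Specifically, setting $h(Z):=\sum_{X\le Z}|X|\,\mu(X,G)$ for $Z\in[N,G]$, the key observation is that every $X\le Y$ has a unique value $XN\in[N,Y]$, so summing the left side of $(\star)$ over $Z\in[N,Y]$ in place of $Y$ telescopes to $h(Y)$. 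Möbius inversion in $[N,Y]$ therefore expresses the left side of $(\star)$ as a signed combination of $h(Z)$'s indexed by $[N,Y]$, and matching this against $m_{G,N}\cdot|Y|\cdot\mu(Y/N,G/N)$ reduces, via a double Möbius inversion, to the base identity $h(G)=|G|\,m_{G,G}$ applied inside sublattices together with the defining equation of $m_{G,N}$ as the $Y=G$ specialization of $(\star)$.

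Granting $(\star)$, the remainder is bookkeeping: under the lattice isomorphism $[N,G]\cong\Sub(G/N)$, with $\bar Y:=Y/N$ and $|Y|=|N|\cdot|\bar Y|$, I compute
\[
|G|\,m_{G,M}=m_{G,N}\sum_{\substack{N\le Y\le G\\YM=G}}|Y|\,\mu(Y/N,G/N)=m_{G,N}\cdot|N|\sum_{\substack{\bar Y\le G/N\\\bar Y(M/N)=G/N}}|\bar Y|\,\mu(\bar Y,G/N)=|G|\,m_{G,N}\,m_{G/N,M/N},
\]
and dividing by $|G|$ yields the claim. The main obstacle is the Möbius-theoretic identity $(\star)$: everything else is a routine partition of sums combined with the correspondence theorem for subgroups of $G/N$.
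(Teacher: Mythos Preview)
The paper gives no proof of this statement: it is simply quoted from \cite{Bo2}, where the argument runs through Burnside-ring idempotents rather than through any direct manipulation of the defining sum. In that setting one has $\mathrm{Def}^{G}_{G/N}(e^{G}_{G})=m_{G,N}\,e^{G/N}_{G/N}$ for every $N\unlhd G$, and the identity $m_{G,M}=m_{G,N}\,m_{G/N,M/N}$ then drops out of the transitivity of deflation, $\mathrm{Def}^{G}_{G/M}=\mathrm{Def}^{G/N}_{(G/N)/(M/N)}\circ\mathrm{Def}^{G}_{G/N}$. So there is nothing in the paper to compare your combinatorial approach against.

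On its own merits: the partition by $Y=XN$ and the bookkeeping after $(\star)$ are correct, but you have not proved $(\star)$. If one M\"obius-inverts over $[N,Y]$ as you propose, $(\star)$ becomes the equivalent assertion
\[
h(Y)\;=\;\sum_{X\le Y}|X|\,\mu(X,G)\;=\;m_{G,N}\sum_{N\le Z\le Y}|Z|\,\mu(Z/N,G/N)\qquad\text{for all }Y\in[N,G].
\]
At $Y=G$ this reads $m_{G,G}=m_{G,N}\,m_{G/N,G/N}$, which is already the proposition in the special case $M=G$ and is by no means a triviality. For $Y\lneq G$ the left side $h(Y)$ has no evident closed form: it mixes the M\"obius function of the \emph{full} lattice of $G$ with a cutoff at $Y$, so your phrase ``$h(G)=|G|\,m_{G,G}$ applied inside sublattices'' cannot be taken literally---inside the interval $[1,Z]$ the relevant values would be $\mu(X,Z)$, not the $\mu(X,G)$ that actually appear in $h(Z)$. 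The ``double M\"obius inversion'' you invoke is never carried out. In short, $(\star)$ is a true and rather sharp pointwise refinement of the proposition (your sanity checks are correct, and e.g.\ the dihedral group of order $8$ with $N$ its centre confirms it further), but establishing it requires a genuine idea beyond routine inversion; as written, the proposal asserts the key lemma without proving it.
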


%
%
%
%
%

%
%
%
%
We collect two results that will be needed later.

When $p$ is a prime number, recall that a finite group $G$ is called cyclic modulo $p$ (or $p$-hypo-elementary) if
$G/O_{p}(G)$ is cyclic. And M. Baumann has proven the Conjecture under the additional assumption that finite group $G$ is cyclic modulo $p$ in \cite{Ba}.

\begin{theorem}\cite[Theorem 3]{Ba} Let $p$ be a prime number and $G$ be a finite group. Then $\beta(G)$ is cyclic modulo $p$ if
and only if $G$ is cyclic modulo $p$.
\end{theorem}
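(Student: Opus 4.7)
The plan is to prove the two implications of the biconditional separately: one direction is a purely formal observation, while the other requires genuine structural work.

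For the ``$G$ cyclic mod $p$ $\Rightarrow$ $\beta(G)$ cyclic mod $p$'' direction, I would first note that by Definition 2.5, $\beta(G) = G/N$ is by construction a quotient of $G$, and then check that the class of $p$-hypo-elementary groups is closed under quotients. The latter is a short group-theoretic check: if $\pi \colon G \twoheadrightarrow H$ has kernel $K$, then $\pi(O_p(G)) \leq O_p(H)$, so $H/O_p(H)$ is a quotient of $G/(O_p(G)K)$, which is itself a quotient of $G/O_p(G)$; cyclicity is inherited along the way.

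For the converse, I would argue by strong induction on $|G|$. Write $\beta(G) = G/N$ with $N \unlhd G$ maximal subject to $m_{G,N} \neq 0$. If $N = 1$, then $G \cong \beta(G)$ is itself $p$-hypo-elementary and we are done. Otherwise, I must show that the non-vanishing of $m_{G,N}$ combined with $G/N$ being $p$-hypo-elementary forces $G$ itself to be $p$-hypo-elementary. The natural line of attack is to unfold
\[
m_{G,N} = \frac{1}{|G|} \sum_{XN = G} |X|\,\mu(X,G),
\]
noting that the subgroups $X$ in the sum are exactly the ``supplements of $N$ in $G$''; then match a non-vanishing contribution against the $p$-hypo-elementary structure of $G/N$ (via Theorem 2.6(3), which says $\beta(G) \cong \beta(G/M)$ whenever $m_{G,M}\neq 0$) to recover that $G/O_p(G)$ is cyclic, and appeal to the inductive hypothesis on proper sections where appropriate.

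The main obstacle is precisely this inductive step, which requires translating the arithmetic statement $m_{G,N} \neq 0$ into a group-theoretic constraint on $G$. The standard machinery here is Conlon's induction theorem (\cite[(80.51)]{CR}), which identifies the primitive idempotents of $\mathbb{Q} \otimes B(G)$ with conjugacy classes of $p$-hypo-elementary subgroups of $G$. Expressing $m_{G,N}$ through these idempotents, and tracking their behaviour under inflation from $G/N$ up to $G$, should deliver the needed reduction and essentially reproduce Baumann's route in \cite{Ba}; the combinatorial core is a cancellation in the Möbius sum that is invisible at the level of ordinary group theory and only becomes transparent once one passes to the idempotent decomposition.
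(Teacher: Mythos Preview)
The paper does not contain a proof of this theorem. It is stated as Theorem~2.11 with the citation \cite[Theorem 3]{Ba} and no proof environment follows; the result is quoted from Baumann's paper purely as background. Indeed, the introduction explicitly says that Baumann's proposition ``relies on the Conlon theorem \cite[(80.51)]{CR}'', and one motivation of the present paper is precisely to develop tools that \emph{avoid} appealing to Baumann's result. So there is no in-paper proof to compare your proposal against.

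That said, your outline is consistent with what the paper reports about Baumann's method: the forward direction is the trivial closure-under-quotients observation, and for the converse you correctly identify Conlon's induction theorem (the idempotent description of $\mathbb{Q}\otimes B(G)$ in terms of $p$-hypo-elementary subgroups) as the engine. Your easy direction is fine as written. Your hard direction, however, is a plan rather than a proof: phrases like ``match a non-vanishing contribution against the $p$-hypo-elementary structure'' and ``should deliver the needed reduction'' are exactly where the actual work of \cite{Ba} lives, and you have not supplied it. In particular, the step linking $m_{G,N}\neq 0$ to a statement about $O_p(G)$ via idempotents requires a concrete computation (Baumann works with the functor of permutation modules and its composition factors), not just an invocation of Conlon. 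If you intend to reproduce the argument, you would need to make that translation explicit.
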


In \cite{Bo1}, S. Bouc has proven the Conjecture under the additional assumption that finite group $G$ is solvable.

\begin{theorem}\cite[Theorem 3.1]{Bo1} Let $G$ be a solvable finite group. Then $\beta(G)$ is nilpotent if
and only if $G$ is nilpotent.
\end{theorem}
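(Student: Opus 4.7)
The ``only if'' direction is immediate from Definition 2.5: $\beta(G)$ is presented as a quotient $G/M$, and nilpotency is preserved under taking quotients.

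For the converse I would argue the contrapositive by strong induction on $|G|$, assuming that every solvable non-nilpotent group of order strictly less than $|G|$ has a non-nilpotent $\beta$, and showing the same for $G$. If $G$ is itself a $B$-group, then by Definitions 2.3 and 2.5 (noting Remark 2.2 to see that $N=1$ is the unique maximal normal subgroup with $m_{G,N}\neq 0$), one has $\beta(G)\cong G$, which is not nilpotent, and we are done. Otherwise there is a minimal normal subgroup $N$ of $G$ with $m_{G,N}\neq 0$, and Theorem 2.6(c) yields $\beta(G)\cong\beta(G/N)$; it therefore suffices to choose such an $N$ with $G/N$ non-nilpotent, for then the inductive hypothesis applied to the smaller solvable group $G/N$ gives that $\beta(G)\cong\beta(G/N)$ is non-nilpotent.

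Since $G$ is solvable, every minimal normal subgroup is elementary abelian. If $N$ were central in $G$ and $G/N$ were nilpotent, then $G$ would be nilpotent (pull back the upper central series of $G/N$ and prepend $1\leq N\leq Z(G)$). Hence whenever $G/N$ is nilpotent, $N$ must be non-central. The remaining reduction is the following key lemma: \emph{if $G$ is a finite solvable group and $N$ is a non-central, elementary abelian minimal normal subgroup of $G$ with $G/N$ nilpotent, then $m_{G,N}=0$}. Granting this lemma, every minimal normal $N$ with $m_{G,N}\neq 0$ must have $G/N$ non-nilpotent, and the induction closes.

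The main obstacle is proving this key lemma. My plan is to expand
\[
m_{G,N}=\frac{1}{|G|}\sum_{XN=G,\,X\leq G}|X|\mu(X,G)
\]
and group the sum by $G$-conjugacy classes of complements to $N$. Under the hypotheses, $G/N=P/N\times Q$, where $P/N$ is the Sylow $p$-subgroup and $Q$ is a nilpotent $p'$-complement acting non-trivially on $N$ (by non-centrality). Using Proposition 2.10 to peel off the $Q$-direction and then performing Möbius inversion in the lattice of $Q$-invariant subgroups of $N$ converts the remaining sum into a weighted fixed-point count of $Q$ acting on $N\setminus\{1\}$; non-triviality of this action forces a Brauer-type coprime cancellation that makes the total vanish. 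This Möbius/fixed-point cancellation is the technical heart of the argument.
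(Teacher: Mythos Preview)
The paper does not give its own proof of this statement; it is simply quoted from \cite{Bo1} as background (see the sentence preceding it: ``S.~Bouc has proven the Conjecture under the additional assumption that the finite group $G$ is solvable''). There is therefore no in-paper argument to compare your proposal against.

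Judged on its own, your inductive reduction via Theorem~2.6(3) to a ``key lemma'' about minimal normal subgroups is correct and is precisely the architecture of Bouc's original proof in \cite{Bo1}; the lemma you isolate --- that for solvable $G$, a non-central elementary abelian minimal normal $N$ with $G/N$ nilpotent forces $m_{G,N}=0$ --- is also the right target. The gap is that your sketch of the lemma's proof does not hold together. Proposition~2.10 relates $m_{G,M}$ to $m_{G,N}\,m_{G/N,M/N}$ for \emph{nested} normal subgroups $N\leq M$; it does not ``peel off'' a complementary factor $Q$, so that step has no content as written. Likewise ``Brauer-type coprime cancellation'' names a hope rather than a mechanism. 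What one actually needs is, first, to show that $N$ is complemented in $G$: from $G/N$ nilpotent one gets $P\trianglelefteq G$ and a $p'$-Hall $Q$ with $[P,Q]\leq N$; then $N\leq Z(P)$ by minimality, $C_N(Q)=1$ by non-centrality and minimality, and coprime action gives $P=C_P(Q)N$, so $C_P(Q)Q$ is a complement to $N$ in $G$. Second, one needs the explicit evaluation of $m_{G,N}$ in the complemented abelian case (this is the computation carried out in \cite{Bo1,Bo2}), which vanishes exactly because $N$ has no nontrivial $G$-fixed points. Your plan gestures at the right ingredients but supplies neither the complementation argument nor the actual M\"obius computation; as written it is an outline, not a proof.
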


\section{\bf Some lemmas about the  M$\mathrm{\ddot{o}}$bius function}
In this section, we prove some lemmas about the M$\mathrm{\ddot{o}}$bius function. The main lemma is the Lemma 3.4, and the Lemma 3.2-3
 are prepared for it. In fact, the Lemma 3.4
will be used in computing $m_{G,N}$ where $G$ is a finite group and $N\unlhd G$.

Let $G$ be a finite group and let $\mu$ denote the M$\mathrm{\ddot{o}}$bius function of
subgroup lattice of $G$. We refer to \cite{Ai},\cite[p.94]{Y}:

Let $K,D\leq G$, recall the Zeta function of $G$ as following:
$$\zeta(K, D)=\left\{ \begin{array}{ll}
1, &
\mbox{if}~ K\leq D;
\\[2ex] 0, &\mbox{if} ~K\nleq D.\end{array}\right.$$

Set $n:=|\{K|K\leq G\}|$, we have a $n\times n$ matrix $A$ as following:
$$A:=(\zeta(K, D))_{K,D\leq G}.$$
It is easy to find that $A$ is an invertible matrix, so there exists $A^{-1}$ such that
$$AA^{-1}=E,$$
Here, $E$ is an identity element. Recall the M$\mathrm{\ddot{o}}$bius function as following:
$$(\mu(K, D))_{K,D\leq G}=A^{-1}.$$

Now, we set the subgroup lattice of $G$ as following:
$$\{K|K\leq G\}:=\{1=K_1, K_2,\ldots, K_n=G\}$$
where $n=|\{K|K\leq G\}|$.

\begin{definition}Let $K\leq G$, there exists a proper subgroup series of $K$ as following:
$$\sigma: 1=K_1\lneq  K_2\lneq  K_3\lneq \cdots \lneq K_t=K$$
where $K_i$ are subgroups of $G$ and $K_i$ is a proper subgroup of $K_{i+1}$ for all $i$.
Set $\mathfrak{X}_{K}$ is the set of elements like above $\sigma$. And we call $t$ is the length of $\sigma$ and set $t:=l(\sigma)$

We define the height of $K$ as following:
$$\mathrm{ht}(K)=\mathrm{max}\{l(\sigma)|\sigma\in \mathfrak{X}_{K}\}.$$
It is easy to see that $\mathrm{ht}(1)=1$.

\end{definition}

\begin{lemma}Let $G$ be a finite group and $K, L$ be subgroups of $G$.
If $\mathrm{ht}(K)\geq \mathrm{ht}(L)$ and $K\neq L$, then $\zeta(K, L)=0$.
\end{lemma}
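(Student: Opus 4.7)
The plan is to argue by contradiction on the definition of $\zeta(K,L)$. Recall that $\zeta(K,L) = 0$ iff $K \nleq L$, so the claim is equivalent to: if $\mathrm{ht}(K) \geq \mathrm{ht}(L)$ and $K \neq L$, then $K \nleq L$. I will therefore assume $K \leq L$ and, together with $K \neq L$, derive $K \lneq L$, then produce a proper chain ending at $L$ whose length exceeds $\mathrm{ht}(K)$.

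More concretely, let $t := \mathrm{ht}(K)$ and pick a maximal length chain
$$\sigma: \ 1 = K_1 \lneq K_2 \lneq \cdots \lneq K_t = K$$
in $\mathfrak{X}_K$ achieving $l(\sigma) = t$; such a chain exists by the definition of height. Since $K \lneq L$, appending $L$ yields the strictly increasing chain
$$\sigma': \ 1 = K_1 \lneq K_2 \lneq \cdots \lneq K_t = K \lneq L,$$
which lies in $\mathfrak{X}_L$ and satisfies $l(\sigma') = t+1$. Consequently $\mathrm{ht}(L) \geq t+1 = \mathrm{ht}(K) + 1$, contradicting the hypothesis $\mathrm{ht}(K) \geq \mathrm{ht}(L)$. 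Hence $K \nleq L$, and so $\zeta(K,L) = 0$ by definition.

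There is no real obstacle here; the argument is a one-line observation about extending a maximal chain. The only thing to be careful about is making sure the appended inclusion $K \lneq L$ is strict, which is guaranteed by combining $K \leq L$ with the assumption $K \neq L$. I expect the author's proof to follow exactly this reasoning.
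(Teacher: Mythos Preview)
Your proposal is correct and follows essentially the same argument as the paper's proof: assume $\zeta(K,L)\neq 0$ to get $K\leq L$, take a chain of length $t=\mathrm{ht}(K)$ ending at $K$, append $L$ to obtain a chain of length $t+1$ in $\mathfrak{X}_L$, and conclude $\mathrm{ht}(L)\geq t+1>\mathrm{ht}(K)$, a contradiction. Your prediction that the author's proof would match yours is accurate.
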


\begin{proof} Suppose that $\zeta(K, L)\neq0$, by the definition of Zeta function,
 we have that $K\leq L$. Set $\mathrm{ht}(K)=t$, there exists
 a proper subgroup series of $K$ as following:
$$1=K_1\lneq  K_2\lneq  K_3\lneq \cdots \lneq K_t=K$$
Also $K\leq L$ and $K\neq L$, thus we have the following series of $L$:
$$1=K_1\lneq  K_2\lneq  K_3\lneq \cdots \lneq K_t=K\lneq L.$$
Hence $\mathrm{ht}(L)\geq t+1\gneq t= \mathrm{ht}(K)$. That is a contradiction
to $\mathrm{ht}(K)\geq \mathrm{ht}(L)$.
\end{proof}

\begin{lemma}Let $G$ be a finite group. Let $\{K_i|i=1,2,\ldots, n\}$ be the set of all subgroups of $G$. And Set $K_1=1, K_n=G$. Then we can reorder the sequence
$1=K_1, K_2,\ldots, K_n=G$ as the sequence $1=K_{l^{(1)}}, K_{l^{(2)}},\ldots, K_{l^{(n)}}=G$ such that
$(\zeta(K_{l^{(j)}}, K_{l^{(k)}}))_{n\times n}$ is an invertible upper triangular matrix.
Here, $\{l^{(1)}, l^{(2)}, \ldots, l^{(n)}\}=\{1, 2, \ldots, n\}$.
\end{lemma}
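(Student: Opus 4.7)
My plan is to produce the required reordering by sorting the subgroups of $G$ in non-decreasing order of the height function introduced in Definition 3.1, and then to read off upper triangularity directly from Lemma 3.2.

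First I would choose any permutation $l^{(1)}, l^{(2)}, \ldots, l^{(n)}$ of $\{1, 2, \ldots, n\}$ such that
\[
\mathrm{ht}(K_{l^{(1)}}) \leq \mathrm{ht}(K_{l^{(2)}}) \leq \cdots \leq \mathrm{ht}(K_{l^{(n)}}),
\]
breaking ties arbitrarily. Because every proper chain of subgroups starting at $1$ can be extended through $G$, the trivial subgroup $1$ achieves the minimum value $\mathrm{ht}(1)=1$ and $G$ achieves the maximum value of $\mathrm{ht}$, so we may arrange the ordering so that $l^{(1)}=1$ and $l^{(n)}=n$, which matches the requirement that $K_{l^{(1)}}=1$ and $K_{l^{(n)}}=G$.

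Next, for any pair of indices $j > k$, the construction gives $\mathrm{ht}(K_{l^{(j)}}) \geq \mathrm{ht}(K_{l^{(k)}})$ while $K_{l^{(j)}} \neq K_{l^{(k)}}$ (the original list has no repetitions). Lemma 3.2 therefore forces $\zeta(K_{l^{(j)}}, K_{l^{(k)}}) = 0$, which is exactly the condition that the matrix $(\zeta(K_{l^{(j)}}, K_{l^{(k)}}))_{n \times n}$ be upper triangular. The diagonal entries $\zeta(K_{l^{(j)}}, K_{l^{(j)}})$ are all equal to $1$, so the determinant is $1$ and the matrix is invertible.

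I do not expect any serious obstacle here: the entire content of the lemma is repackaging Lemma 3.2 once one notices that height is a monotone rank function compatible with the Zeta function. The only minor subtlety is to verify that height ordering is consistent with placing $1$ first and $G$ last, but this is immediate from the fact that any maximal proper chain in the subgroup lattice begins at $1$ and ends at $G$.
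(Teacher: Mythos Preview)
Your proposal is correct and follows essentially the same approach as the paper: both order the subgroups by non-decreasing height and then invoke Lemma~3.2 to conclude that all below-diagonal entries of the zeta matrix vanish. The paper spells out the height strata $\mathfrak{T}_1,\ldots,\mathfrak{T}_m$ explicitly while you phrase it as a sort, and you add the observation that the diagonal entries equal $1$ (hence invertibility), which the paper leaves implicit; but the argument is the same.
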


\begin{proof} As the above definition of height of subgroups, we can set
\begin{eqnarray*}
~
&~&\mathfrak{T}_{1}=\{K\leq G|\mathrm{ht}(K)=1\}=\{1=K_1\};\\
&~&\mathfrak{T}_{2}=\{K\leq G|\mathrm{ht}(K)=2\}:=\{K_{2_1}, K_{2_2},\ldots K_{2_{t_{2}}}\};\\
&~&\cdots\cdots\cdots\\
&~&\mathfrak{T}_{m-1}=\{K\leq G|\mathrm{ht}(K)=m-1\}:=\{K_{(m-1)_1}, K_{(m-1)_2},\ldots K_{(m-1)_{t_{m-1}}}\};\\
&~&\mathfrak{T}_{m}=\{K\leq G|\mathrm{ht}(K)=m\}=\{K_n=G\};\\
&~&\mathfrak{T}_{m+1}=\{K\leq G|\mathrm{ht}(K)=m+1\}=\emptyset.
\end{eqnarray*}
Now,
we can reorder
$1=K_1, K_2,\ldots, K_n=G$ as
\begin{eqnarray*}
~
&~&1=K_1;\\
&~&K_{2_1}, K_{2_2},\ldots K_{2_{t_{2}}};\\
&~&\cdots\cdots\cdots\\
&~&K_{(m-1)_1}, K_{(m-1)_2},\ldots K_{(m-1)_{t_{m-1}}};\\
&~&K_n=G.
\end{eqnarray*}
We can set
$$l^{(1)}:=1, l^{(2)}:=2_1,\ldots, (m-1)_{t_{m-1}}:=l^{(n-1)}, l^{(n)}:=n.$$
Let $s\gneq r$, we want to prove that $$\zeta(K_{l^{(s)}}, K_{l^{(r)}})=0.$$
Here, we can set
$K_{l^{(s)}}=K_{k_{a}}$ and $K_{l^{(r)}}=K_{j_{b}}$ for $1\leq a\leq t_{k},~ 1\leq b \leq t_{j}$.
Since $s\gneq r$, thus $k\geq j$.
Then by the Lemma 3.2, we have
$$\zeta(K_{k_{a}}, K_{j_{b}})=0$$
for $1\leq a\leq t_{k},~ 1\leq b \leq t_{j}$.

So, we  reorder
$1=K_1, K_2,\ldots, K_n=G$ as $1=K_{l^{(1)}}, K_{l^{(2)}},\ldots,$ $ K_{l^{(n)}}=G$ and we have
$$A:=(\zeta(K_{l^{(j)}}, K_{l^{(k)}}))_{n\times n}$$ is an invertible upper triangular matrix.
\end{proof}

%

Let us list the main lemma as following, and this lemma is used to prove the Proposition 4.1 in the Section 4.

\begin{lemma}Let $G$ be a finite group. Let $\{K_i|i=1,2,\ldots, n\}$ be the set of all subgroups of $G$.
 And Set $K_1=1, K_n=G$. Then we have $\mu(K_i, K_{i'})=0$ if $K_i\nleq K_{i'}$.
\end{lemma}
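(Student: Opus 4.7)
The plan is to exploit the uniqueness of $A^{-1} = (\mu(K, D))_{K, D \leq G}$, which follows from the invertibility of $A$ established in Lemma 3.3. Fix a subgroup $L = K_{i'}$, and consider the column of $A^{-1}$ indexed by $L$, i.e.\ the vector $v$ with $v(K) = \mu(K, L)$. This is the unique vector satisfying $Av = e_L$, where $e_L$ is the standard basis vector with a $1$ at position $L$. I will show $v(K) = 0$ for $K \nleq L$ by producing a second vector $\tilde v$ with this vanishing property and verifying that it also solves $A\tilde v = e_L$.

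Define $\tilde v(D) = 0$ whenever $D \nleq L$. For $D \leq L$, define $\tilde v$ on the interval $[1, L]$ by downward induction: set $\tilde v(L) = 1$ and, for $D \lneq L$, set $\tilde v(D) = -\sum_{D \lneq D' \leq L} \tilde v(D')$. By construction, $\sum_{D \leq D' \leq L} \tilde v(D') = \delta_{D, L}$ for every $D \leq L$.

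Now I verify $A\tilde v = e_L$, i.e.\ $\sum_{D \geq K} \tilde v(D) = \delta_{K, L}$ for every subgroup $K \leq G$. If $K \leq L$, then since $\tilde v$ vanishes outside $[1, L]$, the sum reduces to $\sum_{K \leq D \leq L} \tilde v(D) = \delta_{K, L}$ by the defining recursion. If $K \nleq L$, then every $D$ with $D \geq K$ must satisfy $D \nleq L$ (otherwise $K \leq D \leq L$ would force $K \leq L$), so every term vanishes and the sum is $0 = \delta_{K, L}$. By the uniqueness of the solution to $Av = e_L$, we conclude $\tilde v = v$, and therefore $\mu(K, L) = \tilde v(K) = 0$ whenever $K \nleq L$.

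The argument is essentially a matching of the matrix-inverse definition used in the paper with the usual recursive characterization of $\mu$ on the interval $[1, L]$. The only substantive ingredient beyond this is the invertibility of $A$, which has already been secured by Lemma 3.3; the key elementary observation is the transitivity step $K \nleq L$ and $D \geq K \Rightarrow D \nleq L$. I do not anticipate a genuine obstacle, since the entire verification reduces to the two bookkeeping cases above.
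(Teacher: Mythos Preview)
Your argument is correct. You exploit the invertibility of $A$ (from Lemma~3.3) to conclude that the column of $A^{-1}$ indexed by $L$ is the \emph{unique} solution of $Av=e_L$; you then exhibit a solution $\tilde v$ that vanishes outside $[1,L]$ by the standard downward recursion, and the transitivity observation ($K\nleq L$ and $D\geq K\Rightarrow D\nleq L$) handles the only nontrivial case. This is clean and complete.

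The paper, by contrast, does not use a uniqueness argument at all. Its proof is a lengthy case analysis on the heights $\mathrm{ht}(K_i)$ and $\mathrm{ht}(K_{i'})$: in each case it produces a specific reordering of the subgroup list in which $K_i$ comes \emph{after} $K_{i'}$ while the zeta matrix remains upper triangular, so the inverse is again upper triangular and the desired entry $\mu(K_i,K_{i'})$ lies below the diagonal. Your route is considerably shorter and more conceptual: it is essentially the usual proof that the M\"obius function of an interval in a finite poset agrees with the corresponding entry of the inverse incidence matrix, and it works verbatim for any finite poset. The paper's approach, on the other hand, makes the triangularity very explicit and keeps everything at the level of concrete matrix reorderings, at the cost of several pages of cases.
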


\begin{proof} By the proof of the above lemma, we can suppose that
 $\{K_i|i=1,2,\ldots, n\}$ be the set of all subgroups of $G$ and set $K_1=1, K_n=G$  such that
$(\zeta(K_{j}, K_{k}))_{n\times n}$ is an invertible upper triangular matrix.
Here, $1\leq j, k\leq n $.

To prove the lemma, we need to adjust the positions of some $K_j$ such that the sequence $1=K_1, K_2,\ldots, K_n=G$
are reordered as $1=K_{1^{(1)}}, K_{2^{(1)}},\ldots, K_{n^{(1)}}=G$, and we can get that $(\zeta(K_{j^{(1)}}, K_{k^{(1)}})_{n\times n}$ is an invertible upper triangular matrix.
Here, $j^{(1)}, k^{(1)}\in \{1^{(1)},2^{(1)},\ldots,n^{(1)}\}=\{1,2,\ldots, n\}$.

First, we can set $\mathrm{ht}(K_{i})=k$ and $\mathrm{ht}(K_{i'})=j$. Moreover,
 we set $K_{i}=K_{k_{a}}$ and $K_{i'}=K_{j_{b}}$ for $1\leq a\leq t_{k},~ 1\leq b \leq t_{j}$.
If $k\gneq j$, it is easy to see that $\mu(K_{k_{a}}, K_{j_{b}})=\mu(K_i, K_{i'})=0$.
Now, we will consider the cases when $k=j$ and $k\lneq j$ as following.

\textbf{Case 1.} $k=j$. We will prove this case when $k_{a}\gneq k_{b}$ and $k_{a}\lneq k_{b}$ as following.

\textbf{Case 1.1.} If $k_{a}\gneq k_{b}=j_{b}$, we can set
\begin{eqnarray*}
~
&~&\mathfrak{T}_{1}=\{K\leq G|\mathrm{ht}(K)=1\}=\{1=K_1\};\\
&~&\mathfrak{T}_{2}=\{K\leq G|\mathrm{ht}(K)=2\}:=\{K_{2_1}, K_{2_2},\ldots K_{2_{t_{2}}}\};\\
&~&\cdots\cdots\cdots\\
&~&\mathfrak{T}_{k}=\{K\leq G|\mathrm{ht}(K)=k\}:=\{K_{k_1}, K_{k_2},\ldots K_{k_{t_{k}}}\};\\
&~&\cdots\cdots\cdots\\
&~&\mathfrak{T}_{m-1}=\{K\leq G|\mathrm{ht}(K)=m-1\}:=\{K_{(m-1)_1}, K_{(m-1)_2},\ldots K_{(m-1)_{t_{m-1}}}\};\\
&~&\mathfrak{T}_{m}=\{K\leq G|\mathrm{ht}(K)=m\}=\{K_n=G\};\\
&~&\mathfrak{T}_{m+1}=\{K\leq G|\mathrm{ht}(K)=m+1\}=\emptyset.
\end{eqnarray*}

It is easy to see that
we can reorder of
$1=K_1, K_2,\ldots, K_n=G$ as
\begin{eqnarray*}
~
&~& K_1(=1),\\
&~& K_{2_1}, K_{2_2},\ldots K_{2_{t_{2}}},\\
&~& \cdots\cdots\cdots\\
&~& K_{k_1}, K_{k_2},\ldots,  K_{k_{b}}, K_{k_{b+1}}\ldots,  K_{k_{a}}, K_{k_{a+1}}, \ldots, K_{k_{t_{k}}},\\
&~& \cdots\cdots\cdots\\
&~& K_{(m-1)_1}, K_{(m-1)_2},\ldots K_{(m-1)_{t_{m-1}}},\\
&~& K_n(=G).
\end{eqnarray*}
And we set this order of subgroup series of $G$ as
$$1=K_{r^{(1)}}, K_{r^{(2)}},\ldots, K_{r^{(n)}}=G.$$
We have
$$B:=(\zeta(K_{r^{(j)}}, K_{r^{(k)}}))_{n\times n}$$ is an invertible upper triangular matrix by the proof of the Lemma 3.2.
Hence $B^{-1}$ is also invertible upper triangular matrix, thus $\mu(K_{j_{b}}, K_{k_{a}})=0$.
That is $\mu(K_{i}, K_{i'})=0$.

\textbf{Case 1.2.} If $k_{a}\lneq k_{b}=j_{b}$, we can set
\begin{eqnarray*}
~
&~&\mathfrak{T}_{1}=\{K\leq G|\mathrm{ht}(K)=1\}=\{1=K_1\};\\
&~&\mathfrak{T}_{2}=\{K\leq G|\mathrm{ht}(K)=2\}:=\{K_{2_1}, K_{2_2},\ldots K_{2_{t_{2}}}\};\\
&~&\cdots\cdots\cdots\\
&~&\mathfrak{T}_{k}=\{K\leq G|\mathrm{ht}(K)=k\}:=\{K_{k_1}, K_{k_2},\ldots K_{k_{t_{k}}}\};\\
&~&\cdots\cdots\cdots\\
&~&\mathfrak{T}_{m-1}=\{K\leq G|\mathrm{ht}(K)=m-1\}:=\{K_{(m-1)_1}, K_{(m-1)_2},\ldots K_{(m-1)_{t_{m-1}}}\};\\
&~&\mathfrak{T}_{m}=\{K\leq G|\mathrm{ht}(K)=m\}=\{K_n=G\};\\
&~&\mathfrak{T}_{m+1}=\{K\leq G|\mathrm{ht}(K)=m+1\}=\emptyset.
\end{eqnarray*}

It is easy to see that
we can reorder
$1=K_1, K_2,\ldots, K_n=G$ as
\begin{eqnarray*}
~
&~& K_1(=1),\\
&~& K_{2_1}, K_{2_2},\ldots K_{2_{t_{2}}},\\
&~& \cdots\cdots\cdots\\
&~& K_{k_1}, K_{k_2},\ldots,  K_{k_{b}}, K_{k_{a+1}}\ldots,  K_{k_{a}}, K_{k_{b+1}}, \ldots, K_{k_{t_{k}}},\\
&~& \cdots\cdots\cdots\\
&~& K_{(m-1)_1}, K_{(m-1)_2},\ldots K_{(m-1)_{t_{m-1}}},\\
&~& K_n(=G).
\end{eqnarray*}
That is, $K_{k_{a}}$ and $K_{k_{b}}$ switch places.
And we set this order of subgroups of $G$ as
$$1=K_{r^{(1)}}, K_{r^{(2)}},\ldots, K_{r^{(n)}}=G.$$
We have
$$B:=(\zeta(K_{r^{(j)}}, K_{r^{(k)}}))_{n\times n}$$ is an invertible upper triangular matrix by the Lemma 3.2.
Hence $B^{-1}$ is also invertible upper triangular matrix, thus $\mu(K_{k_{a}}, K_{k_{b}})=0$.
That is $\mu(K_{i}, K_{i'})=0$.

\textbf{Case 2.} $k\lneq j$. We will prove this case when $j-k=1$ and $j-k\geq 2$ as following.

\textbf{Case 2.1.} If $j-k=1$, we can set
\begin{eqnarray*}
~
&~&\mathfrak{T}_{1}=\{K\leq G|\mathrm{ht}(K)=1\}=\{1=K_1\};\\
&~&\mathfrak{T}_{2}=\{K\leq G|\mathrm{ht}(K)=2\}:=\{K_{2_1}, K_{2_2},\ldots K_{2_{t_{2}}}\};\\
&~&\cdots\cdots\cdots\\
&~&\mathfrak{T}_{k}=\{K\leq G|\mathrm{ht}(K)=k\}:=\{K_{k_1}, K_{k_2},\ldots K_{k_{t_{k}}}\};\\
&~&\mathfrak{T}_{k+1}=\{K\leq G|\mathrm{ht}(K)=k+1\}:=\{K_{(k+1)_1}, K_{(k+1)_2},\ldots K_{(k+1)_{t_{k+1}}}\};\\
&~&\cdots\cdots\cdots\\
&~&\mathfrak{T}_{m-1}=\{K\leq G|\mathrm{ht}(K)=m-1\}:=\{K_{(m-1)_1}, K_{(m-1)_2},\ldots K_{(m-1)_{t_{m-1}}}\};\\
&~&\mathfrak{T}_{m}=\{K\leq G|\mathrm{ht}(K)=m\}=\{K_n=G\};\\
&~&\mathfrak{T}_{m+1}=\{K\leq G|\mathrm{ht}(K)=m+1\}=\emptyset.
\end{eqnarray*}
We can set $a=t_{k}$, that is $K_{k_{a}}=K_{k_{t_{k}}}$. And we can set
$b=1$, that is $K_{j_{b}}=K_{(k+1)_{1}}$.

Now,
we  reorder the sequence
$1=K_1, K_2,\ldots, K_n=G$ as
\begin{eqnarray*}
~
&~& K_1(=1),\\
&~& K_{2_1}, K_{2_2},\ldots K_{2_{t_{2}}},\\
&~& \cdots\cdots\cdots\\
&~& K_{k_1}, K_{k_2},\ldots, K_{k_{t_{k}-2}}, K_{k_{t_{k}-1}}, K_{(k+1)_{1}}(=K_{j_{b}}),\\
&~& K_{k_{t_{k}}}(=K_{k_{a}}),  K_{(k+1)_2}, K_{(k+1)_3},\ldots K_{(k+1)_{t_{k+1}}},\\
&~& \cdots\cdots\cdots\\
&~& K_{(m-1)_1}, K_{(m-1)_2},\ldots K_{(m-1)_{t_{m-1}}},\\
&~& K_n(=G).
\end{eqnarray*}
That is, $K_{k_{a}}$ and $K_{j_{b}}$ switch places.
And we set the above sequence as
$$1=K_{r^{(1)}}, K_{r^{(2)}},\ldots, K_{r^{(n)}}=G.$$
That is $$r^{(1)}=1, r^{(2)}=2_1,\ldots, r^{(n-1)}=(m-1)_{t_{m-1}}, r^{(n)}=n.$$
Since $K_{k_{a}}\nleq K_{j_{b}}$, we can set
$K_{j_{b}}=K_{r^{(l)}}, K_{k_{a}}=K_{r^{(l+1)}},$
we have
$B:=(\zeta(K_{r^{(j)}}, K_{r^{(j')}}))_{n\times n}$ is an invertible upper triangular matrix by the Lemma 3.2.
Hence $B^{-1}$ is also invertible upper triangular matrix, thus $\mu( K_{k_{a}}, K_{j_{b}})=0$.
That is $\mu(K_{i}, K_{i'})=0$.

\textbf{Case 2.2.} If $j-k\geq2$, set $c:=j-k$ and we have $j=k+c$. Here, we can set
\begin{eqnarray*}
~
&~&\mathfrak{T}_{1}=\{K\leq G|\mathrm{ht}(K)=1\}=\{1=K_1\};\\
&~&\mathfrak{T}_{2}=\{K\leq G|\mathrm{ht}(K)=2\}:=\{K_{2_1}, K_{2_2},\ldots K_{2_{t_{2}}}\};\\
&~&\cdots\cdots\cdots\\
&~&\mathfrak{T}_{k}=\{K\leq G|\mathrm{ht}(K)=k\}:=\{K_{k_1}, K_{k_2},\ldots K_{k_{t_{k}}}\};\\
&~&\mathfrak{T}_{k+1}=\{K\leq G|\mathrm{ht}(K)=k+1\}:=\{K_{(k+1)_1}, K_{(k+1)_2},\ldots K_{(k+1)_{t_{k+1}}}\};\\
&~&\cdots\cdots\cdots\\
&~&\mathfrak{T}_{k+c-1}=\{K\leq G|\mathrm{ht}(K)=k+c-1\}:=\{K_{(k+c-1)_1}, K_{(k+c-1)_2},\ldots K_{(k+c-1)_{t_{k+c-1}}}\};\\
&~&\mathfrak{T}_{k+c}=\{K\leq G|\mathrm{ht}(K)=k+c\}:=\{K_{(k+c)_1}, K_{(k+c)_2},\ldots K_{(k+c)_{t_{k+c}}}\};\\
&~&\cdots\cdots\cdots\\
&~&\mathfrak{T}_{m-1}=\{K\leq G|\mathrm{ht}(K)=m-1\}:=\{K_{(m-1)_1}, K_{(m-1)_2},\ldots K_{(m-1)_{t_{m-1}}}\};\\
&~&\mathfrak{T}_{m}=\{K\leq G|\mathrm{ht}(K)=m\}=\{K_n=G\};\\
&~&\mathfrak{T}_{m+1}=\{K\leq G|\mathrm{ht}(K)=m+1\}=\emptyset.
\end{eqnarray*}
We can set $a=t_{k}$, that is $K_{k_{a}}=K_{k_{t_{k}}}$. And we can set
$b=1$, that is $K_{j_{b}}=K_{(k+c)_{1}}$.
For each $k+1\leq l \leq k+c-1$, we consider $\mathfrak{T}_{l}$ and we can suppose that there exists  $1\leq s_{l} \leq t_{l}$ such that

$$ K_{l_d} \left\{ \begin{array}{ll}
\lneq K_{(k+c)_{1}}, &
\mbox{if}~ 1\leq d \leq s_{l};
\\[2ex] \nleq K_{(k+c)_{1}}, &\mbox{if}~  s_{l}+1\leq d \leq t_{l}.\end{array}\right.$$

First,
we  reorder
$1=K_1, K_2,\ldots, K_n=G$ as
\begin{eqnarray*}
~
&~& K_1(=1),\\
&~& K_{2_1}, K_{2_2},\ldots K_{2_{t_{2}}},\\
&~& \cdots\cdots\cdots\\
&~& K_{k_1}, K_{k_2},\ldots, K_{k_{t_{k}-2}}, K_{k_{t_{k}-1}},\\
&~& K_{(k+1)_1}, K_{(k+1)_2},\ldots, K_{(k+1)_{s_{k+1}}}, \\
&~& \cdots\cdots\cdots \\
&~& K_{(k+c-1)_1}, K_{(k+c-1)_2},\ldots, K_{(k+c-1)_{s_{k+c-1}}}, \\
&~& K_{(k+c)_{1}}(=K_{j_{b}}), K_{k_{t_{k}}}(=K_{k_{a}}), \\
&~& K_{(k+1)_{s_{k+1}+1}}, K_{(k+1)_{s_{k+1}+2}},\ldots, K_{(k+1)_{t_{k+1}}},\\
&~& \cdots\cdots\cdots\\
&~& K_{(k+c-1)_{s_{k+c-1}+1}}, K_{(k+1)_{s_{k+c-1}+2}},\ldots, K_{(k+1)_{t_{k+1}}},\\
&~&  K_{(k+c)_2}, K_{(k+c)_3},\ldots K_{(k+c)_{t_{k+c}}},\\
&~& \cdots\cdots\cdots\\
&~& K_{(m-1)_1}, K_{(m-1)_2},\ldots K_{(m-1)_{t_{m-1}}},\\
&~& K_n(=G).
\end{eqnarray*}

Now, we set the above sequence as
$$1=K_{r^{(1)}}, K_{r^{(2)}},\ldots, K_{r^{(n)}}=G.$$

To prove $$B:=(\zeta(K_{r^{(j)}}, K_{r^{(j')}}))_{n\times n}$$ is an invertible upper triangular matrix, we will prove the following (1)-(3) first:

(1) For each \begin{eqnarray*}
K_v
&\in&\{ K_{(k+1)_{s_{k+1}+1}}, K_{(k+1)_{s_{k+1}+2}},\ldots, K_{(k+1)_{t_{k+1}}},\\
&~& \cdots\cdots\cdots\\
&~& K_{(k+c-1)_{s_{k+c-1}+1}}, K_{(k+1)_{s_{k+c-1}+2}},\ldots, K_{(k+1)_{t_{k+1}}}\}
\end{eqnarray*} and
\begin{eqnarray*}
K_u
&\in&\{ K_{(k+1)_1}, K_{(k+1)_2},\ldots, K_{(k+1)_{s_{k+1}}}, \\
&~& \cdots\cdots\cdots \\
&~& K_{(k+c-1)_1}, K_{(k+c-1)_2},\ldots, K_{(k+c-1)_{s_{k+c-1}}}\},
\end{eqnarray*}
we can see that
$$\zeta(K_v, K_u)=0.$$
Suppose $\zeta(K_v, K_u)\neq 0$, we have $K_v\leq K_u$.
But $K_u\lneq K_{(k+c)_{1}}$ and $K_v\nleq K_{(k+c)_{1}}$, that is a contradiction.
So $\zeta(K_v, K_u)=0.$

(2) For each \begin{eqnarray*}
K_v
&\in&\{ K_{(k+1)_{s_{k+1}+1}}, K_{(k+1)_{s_{k+1}+2}},\ldots, K_{(k+1)_{t_{k+1}}},\\
&~& \cdots\cdots\cdots\\
&~& K_{(k+c-1)_{s_{k+c-1}+1}}, K_{(k+1)_{s_{k+c-1}+2}},\ldots, K_{(k+1)_{t_{k+1}}}\},
\end{eqnarray*}
we have $K_v\nleq K_{(k+c)_{1}}$, thus
$$\zeta(K_v, K_{(k+c)_{1}})=0.$$

(3) For each\begin{eqnarray*}
K_u
&\in&\{ K_{(k+1)_1}, K_{(k+1)_2},\ldots, K_{(k+1)_{s_{k+1}}}, \\
&~& \cdots\cdots\cdots \\
&~& K_{(k+c-1)_1}, K_{(k+c-1)_2},\ldots, K_{(k+c-1)_{s_{k+c-1}}}\},
\end{eqnarray*}
we can see that
$$\zeta(K_{k_{t_{k}}}, K_u)=0.$$
Suppose $\zeta(K_{k_{t_{k}}}, K_u)\neq 0$, that is $K_{k_{t_{k}}}\leq K_u$.
But $K_u\lneq K_{(k+c)_{1}}$, thus $K_{k_{t_{k}}}\lneq K_{(k+c)_{1}}$.
And we know $K_{(k+c)_{1}}=K_{j_{b}}, K_{k_{t_{k}}}=K_{k_{a}}$
and $K_{k_{a}}=K_i\nleq K_j=K_{j_{b}}$. That is a contradiction.
So $\zeta(K_{k_{t_{k}}}, K_u)=0.$

Since $(1)-(3)$ hold, we have $$B:=(\zeta(K_{r^{(j)}}, K_{r^{(j')}}))_{n\times n}$$ is an invertible upper triangular matrix.
It implies $B^{-1}$ is also an invertible upper triangular matrix.
We can set
$$K_{j_{b}}=K_{r^{(l)}}, K_{k_{a}}=K_{r^{(l+1)}}.$$
That is $\mu( K_{r^{(l+1)}}, K_{r^{(l)}})=0$ because $B^{-1}$ is an invertible upper triangular matrix..
So $\mu( K_{k_{a}}, K_{j_{b}})=0$, it implies $\mu(K_{i}, K_{i'})=0$.
\end{proof}

\section{\bf Computing the $m_{G, N}$ when $|G:N|=p$ for some prime number $p$}

Let $G$ be a finite group and $N \unlhd G$ with $|G: N|=p$. In this section we will compute $m_{G, N}$.
First, we set
$$m_{G,N}':=\frac{1}{|G|}\sum_{XN\neq G, X\leq G} |X|\mu(X, G)=\frac{1}{|G|}\sum_{X\leq N} |X|\mu(X, G);$$
and set
$$M_{G,N}':=\sum_{X\leq N} |X|\mu(X, G)=|G|m_{G,N}'.$$
We can see that
\begin{eqnarray*}
m_{G,N}+m_{G,N}'
&=&\frac{1}{|G|}\sum_{XN= G, X\leq G} |X|\mu(X, G)+\frac{1}{|G|}\sum_{XN\neq G, X\leq G} |X|\mu(X, G)\\
&=&\frac{1}{|G|}\sum_{X\leq G} |X|\mu(X, G)\\
&=&m_{G,G}.
\end{eqnarray*}

So to compute $m_{G, N}$, we only need to compute $M'_{G,N}$. Thus we have the following propositions.
\begin{proposition}Let $G$ be a finite group and $N\unlhd G$ such that $|G:N|=p$ for some prime number $p$.
Then
$$
M_{G,N}'
=-\sum_{Y\lneq G}\sum_{X\leq N\cap Y} |X|\mu(X, Y)$$
\end{proposition}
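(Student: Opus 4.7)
The plan is to deduce the identity from the defining property of the M$\mathrm{\ddot{o}}$bius function combined with a routine exchange of summation order.

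First, I would invoke the standard identity that, for any $X\lneq G$,
\[
\sum_{Y:\, X\leq Y\leq G}\mu(X,Y)=0,
\]
which follows immediately from the fact that the matrix of $\mu$-values is the inverse of the zeta matrix $A$ introduced in the previous section (Lemma 3.4 is what ensures that $\mu(X,Y)=0$ whenever $X\nleq Y$, so the condition $X\leq Y$ is automatically built into every non-vanishing term). Since every $X$ contributing to $M_{G,N}'$ satisfies $X\leq N\lneq G$, the hypothesis $X\lneq G$ holds automatically. Isolating the term $Y=G$ on the left then yields
\[
\mu(X,G)=-\sum_{Y:\, X\leq Y\lneq G}\mu(X,Y).
\]

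Next, I would substitute this into the definition of $M_{G,N}'$, obtaining
\[
M_{G,N}'=\sum_{X\leq N}|X|\mu(X,G)=-\sum_{X\leq N}|X|\sum_{Y:\, X\leq Y\lneq G}\mu(X,Y),
\]
and then swap the order of summation. The joint condition $X\leq N$ together with $X\leq Y$ collapses to the single condition $X\leq N\cap Y$, so the right-hand side becomes exactly
\[
-\sum_{Y\lneq G}\sum_{X\leq N\cap Y}|X|\mu(X,Y),
\]
which is the formula claimed in the proposition.

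The main point that deserves any care is bookkeeping: distinguishing $Y\lneq G$ from $Y\leq G$ when isolating the $Y=G$ term in the M$\mathrm{\ddot{o}}$bius identity, tracking the overall minus sign, and using Lemma 3.4 implicitly to rewrite the inner index set as $X\leq N\cap Y$ rather than $X\leq N$ with $X\leq Y$. I do not expect any genuine obstacle; the argument is essentially one application of M$\mathrm{\ddot{o}}$bius inversion in the subgroup lattice of $G$, with the hypothesis $|G:N|=p$ being used only to ensure $N\lneq G$ so that every $X\leq N$ is automatically a proper subgroup of $G$.
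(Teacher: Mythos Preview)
Your argument is correct and considerably more streamlined than the paper's. The paper proves the identity via an explicit $3\times 3$ block decomposition of the zeta matrix $A$ and its inverse $B$, with the last two rows/columns indexed by $N$ and $G$; from $AB=I$ it extracts $\gamma+\delta=-B_1\beta$, which amounts to $\mu(X,N)+\mu(X,G)=-\sum_{Y\lneq G,\,Y\neq N}\mu(X,Y)$, and then massages $M_{G,N}'+|N|m_{N,N}$ using $\mu(N,G)=-1$ (which requires $N$ maximal, i.e.\ uses the index-$p$ hypothesis) together with Lemma~3.4 to absorb the boundary terms. You instead invoke the single M\"obius relation $\sum_{X\leq Y\leq G}\mu(X,Y)=0$ for $X\lneq G$, isolate the term $Y=G$, substitute, and swap sums; this bypasses the block-matrix bookkeeping entirely and, as you note, shows that the formula holds for any proper subgroup $N$ of $G$, not just one of prime index. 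Both routes ultimately rest on the same underlying identity, but yours reaches it in one step rather than reconstructing it from the matrix inverse.
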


\begin{proof}
Let $\{1=X_{1}, X_{2},\ldots, X_{n}\}$ be the poset of subgroups of $G$ and we set
$X_{n-1}=N, X_{n}=G$. We have the follow matrix:
$$A:=(\zeta(X_{i},X_{j})):=\left(%
\begin{array}{ccc}
A_{1} & \alpha & \beta \\
0 & 1 & 1\\
0 & 0 & 1\\
\end{array}%
\right)$$
where $A_{1}:=(\zeta(X_{i},X_{j}))_{i,j\leq n-2}$ and
$$\alpha:=\left(%
\begin{array}{c}
\zeta(X_{1},N) \\
\zeta(X_{2},N)\\
\vdots\\
\zeta(X_{n-2},N) \\
\end{array}%
\right),~
\beta:=\left(%
\begin{array}{c}
\zeta(X_{1},G) \\
\zeta(X_{2},G)\\
\vdots\\
\zeta(X_{n-2},G) \\
\end{array}%
\right)=\left(%
\begin{array}{c}
1 \\
1\\
\vdots\\
1 \\
\end{array}%
\right)_{(n-2)\times 1}.
$$

Here, we set $B:=(\mu(X_{i},X_{j}))$. We know that $B=A^{-1}$.
We can set
$$B:=(\mu(X_{i},X_{j})):=\left(%
\begin{array}{ccc}
B_{1} & \gamma & \delta \\
0 & 1 & -1\\
0 & 0 & 1\\
\end{array}%
\right)$$
where $B_{1}:=(\mu(X_{i},X_{j}))_{i,j\leq n-2}$ and
$$\gamma:=\left(%
\begin{array}{c}
\mu(X_{1},N) \\
\mu(X_{2},N)\\
\vdots\\
\mu(X_{n-2},N) \\
\end{array}%
\right),~
\delta:=\left(%
\begin{array}{c}
\mu(X_{1},G) \\
\mu(X_{2},G)\\
\vdots\\
\mu(X_{n-2},G) \\
\end{array}%
\right).
$$
Since $AB=1$, we have $A_{1}B_{1}=1$,
$$A_{1}\gamma+\alpha=0,$$
and, $$A_{1}\delta-\alpha+\beta=0.$$
So we have
$A_{1}(\gamma+\delta)=-\beta,$ that is $\gamma+\delta=-A_{1}^{-1}\beta=-B_{1}\beta.$

Now
we compute the following:
\begin{eqnarray*}
M_{G,N}'+|N|m_{N,N}
&=&\sum_{X\leq N} |X|\mu(X, G)+\sum_{X\leq N} |X|\mu(X, N)\\
&=&\sum_{X\leq N} |X|(\mu(X, N)+\mu(X, G))\\
&=&\sum_{X\lneq N} |X|(\mu(X, N)+\mu(X, G))+(\mu(N, N)+\mu(N,G))\\
&=&\sum_{X\lneq N} |X|(\mu(X, N)+\mu(X, G)).
\end{eqnarray*}
Here, $\mu(N, N)=1, \mu(N,G)=-1$ because $N$ is a maximal subgroup of $G$ by $|G/N|=p$ for some prime number $p$.
Since $$
\gamma+\delta=\left(%
\begin{array}{c}
\mu(X_1, N)+\mu(X_1, G) \\
\mu(X_2, N)+\mu(X_2, G)\\
\vdots\\
\mu(X_{n-2}, N)+\mu(X_{n-2}, G) \\
\end{array}%
\right),$$
and, $\gamma+\delta=-B_{1}\beta. $ It implies
$$\gamma+\delta=-\left(%
\begin{array}{cccc}
\mu(X_1, X_1)& \mu(X_1, X_2)&\cdots & \mu(X_1, X_{n-2}) \\
\mu(X_2, X_1)& \mu(X_2, X_2)&\cdots & \mu(X_2, X_{n-2}) \\
\vdots & \vdots &\ddots & \vdots\\
\mu(X_{n-2}, X_1)& \mu(X_{n-2}, X_2)&\cdots & \mu(X_{n-2}, X_{n-2}) \\
\end{array}%
\right)\cdot
\left(%
\begin{array}{c}
1 \\
1\\
\vdots\\
1 \\
\end{array}%
\right)_{(n-2)\times 1}.$$
Thus, we have
$$\mu(X_i, N)+\mu(X_i, G)= -\sum_{Y\lneq G, Y\neq N}\mu(X_i, Y)$$
for $1\leq i\leq n-2$.
Hence, we have
\begin{eqnarray*}
M_{G,N}'+|N|m_{N,N}
&=&-\sum_{X\lneq N} |X|(\mu(X, N)+\mu(X, G))\\
&=&-\sum_{Y\lneq G, Y\neq N}\sum_{X\lneq N} |X|\mu(X, Y).
\end{eqnarray*}

We can see
\begin{eqnarray*}
&~&\sum_{Y\lneq G}\sum_{X\leq N} |X|\mu(X, Y)\\
&=&(\sum_{Y\lneq G, Y\neq N}\sum_{X\lneq N} |X|\mu(X, Y))+\sum_{Y\lneq G, Y\neq N}|N|\mu(N, Y)+\sum_{1\leq X\leq N}|X|\mu(X, N).
\end{eqnarray*}
By the Lemma 3.4, we have $\mu(N, Y)=0$ when $N\nleq Y$. And since
$$\sum_{1\leq X\leq N}|X|\mu(X, N)=|N|m_{N,N},$$
we have $$
\sum_{Y\lneq G}\sum_{X\leq N} |X|\mu(X, Y)=
\sum_{Y\lneq G, Y\neq N}\sum_{X\lneq N} |X|\mu(X, Y)+|N|m_{N,N}.$$
Hence,
\begin{eqnarray*}
M_{G,N}'+|N|m_{N,N}
&=&-\sum_{X\lneq N} |X|(\mu(X, N)+\mu(X, G))\\
&=&-\sum_{Y\lneq G, Y\neq N}\sum_{X\lneq N} |X|\mu(X, Y)\\
&=&-(\sum_{Y\lneq G}\sum_{X\leq N} |X|\mu(X, Y)-|N|m_{N,N}).
\end{eqnarray*}
So we have
$$M_{G,N}'=-\sum_{Y\lneq G}\sum_{X\leq N} |X|\mu(X, Y).$$

Also $\mu(X, Y)=0$ if $X\nleq Y$, thus we have
\begin{eqnarray*}
M_{G,N}'
&=&-\sum_{Y\lneq G}\sum_{X\leq N} |X|\mu(X, Y)\\
&=&-\sum_{Y\lneq G}\sum_{X\leq N\cap Y} |X|\mu(X, Y).
\end{eqnarray*}
\end{proof}

\begin{proposition}Let $G$ be a finite group and $N\unlhd G$ such that $|G:N|=p$ for some prime number $p$.
Then
$$M_{G,N}'=-\sum_{C\leq N, ~C~ \mathrm{is~ cyclic}}\varphi(|C|)-\sum_{Y\lneq G,Y\nleq N}M'_{Y, Y\cap N}.$$
\end{proposition}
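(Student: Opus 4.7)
The starting point is Proposition 4.1, which provides the formula
$$M_{G,N}' = -\sum_{Y \lneq G}\sum_{X \leq N \cap Y}|X|\mu(X,Y).$$
The plan is to split the outer sum into two buckets according to whether $Y \leq N$ or $Y \nleq N$, and to identify each bucket with one of the two summands appearing on the right-hand side of the statement.

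First I would treat the bucket $Y \leq N$. Here $N \cap Y = Y$, so the inner sum collapses to $\sum_{X \leq Y}|X|\mu(X,Y)$, which by Definition 2.1 is exactly $|Y|\,m_{Y,Y}$ (the condition $XY = Y$ is automatic for $X \leq Y$). By Proposition 2.8 this vanishes unless $Y$ is cyclic, so only cyclic subgroups of $N$ contribute. For a cyclic group $C$ of order $n$, subgroups are indexed by divisors of $n$ and $\mu$ restricted to the subgroup lattice coincides with the number-theoretic Möbius function; the classical identity
$$\sum_{d \mid n} d\,\mu(n/d) = \varphi(n)$$
then yields $|C|\,m_{C,C} = \varphi(|C|)$. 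Thus this bucket contributes $-\sum_{C \leq N,\ C \text{ cyclic}}\varphi(|C|)$; note that the requirement $Y \lneq G$ from Proposition 4.1 is automatic here because $N \lneq G$.

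Next I would treat the bucket $Y \nleq N$. Since $|G:N| = p$ is prime and $YN$ is a subgroup of $G$ strictly containing $N$, we must have $YN = G$, so the second isomorphism theorem gives $|Y : Y \cap N| = |YN : N| = p$. In particular $Y \cap N$ is a proper normal subgroup of $Y$ of prime index, so the pair $(Y, Y \cap N)$ satisfies the same hypothesis as $(G,N)$ and $M'_{Y, Y \cap N}$ is defined. By the very definition given in Section 4 this quantity equals $\sum_{X \leq Y \cap N}|X|\mu(X,Y)$, which is precisely the inner sum for this value of $Y$. Summing over all $Y \lneq G$ with $Y \nleq N$ reproduces the second summand of the claim.

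Combining the two buckets gives the stated identity. The only step requiring external input beyond Proposition 4.1 is the cyclic-group evaluation $|C|\,m_{C,C} = \varphi(|C|)$; this is a classical Möbius-inversion computation rather than a genuine obstacle, and once it is in hand the proof is a purely formal rearrangement of the sum in Proposition 4.1.
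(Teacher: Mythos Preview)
Your proof is correct and follows essentially the same approach as the paper: start from Proposition~4.1, split the sum over $Y\lneq G$ according to whether $Y\leq N$ or $Y\nleq N$, evaluate the first bucket via $|Y|m_{Y,Y}$ and Proposition~2.8, and recognize the second bucket as $M'_{Y,Y\cap N}$ after noting $|Y:Y\cap N|=p$. The only difference is that you spell out the identity $|C|\,m_{C,C}=\varphi(|C|)$ for arbitrary cyclic $C$ via the classical M\"obius inversion, whereas the paper simply asserts it.
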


\begin{proof}By the Proposition 4.1, we have
$$M_{G,N}'
=-\sum_{Y\lneq G}\sum_{X\leq N\cap Y} |X|\mu(X, Y)$$

We will compute $\sum_{X\leq N\cap Y} |X|\mu(X, Y)$
by considering the cases when $Y\leq N$ and $Y\neq N$ in the following.

\textbf{Case 1.} $Y\leq N$. We have
\begin{eqnarray*}
\sum_{X\leq N\cap Y} |X|\mu(X, Y)
&=&\sum_{X\leq Y} |X|\mu(X, Y)\\
&=&|Y|m_{Y,Y}.
\end{eqnarray*}
If $Y$ is not cyclic, we have $m_{Y,Y}=0$. If
$Y$ is cyclic, we have $m_{Y,Y}=\frac{\varphi(|Y|)}{|Y|}$.
Hence, we have $$\sum_{X\leq N\cap Y} |X|\mu(X, Y)=\left\{ \begin{array}{ll}
\varphi(|Y|), &
\mbox{if}~ Y~is~ cyclic;
\\[2ex] 0, &\mbox{if} ~Y~is~not~ cyclic.\end{array}\right.$$

\textbf{Case 2.} $Y\nleq N$. That is $YN=G$, it implies $|Y: Y\cap N|=|G: N|=p$. So we have
\begin{eqnarray*}
\sum_{X\leq N\cap Y} |X|\mu(X, Y)
&=&M_{Y, Y\cap N}'
\end{eqnarray*}
by the Definition of $M_{Y, Y\cap N}'$.

Hence,
\begin{eqnarray*}
M_{G,N}'
&=&-\sum_{Y\lneq G, Y\leq N, Y~is~cyclic~~}(\sum_{X\leq N\cap Y} |X|\mu(X, Y))\\
&~&-\sum_{Y\lneq G,Y\leq N, Y~is~not~cyclic~~}(\sum_{X\leq N\cap Y} |X|\mu(X, Y))\\
&~&-\sum_{Y\lneq G, Y\nleq N}\sum_{X\leq N\cap Y} |X|\mu(X, Y)\\
&=&-\sum_{Y\lneq G, Y\leq N, Y~is~cyclic} \varphi(|Y|)\\
&~&-\sum_{Y\lneq G,Y\leq N, Y~is~not~cyclic} 0\\
&~&-\sum_{Y\lneq G, Y\nleq N}M_{Y, Y\cap N}'\\
&=&-\sum_{C\leq N, ~C~ \mathrm{is~ cyclic}}\varphi(|C|)-\sum_{Y\lneq G,Y\nleq N}M'_{Y, Y\cap N}.
\end{eqnarray*}
\end{proof}

\begin{remark} (1) When $Y\nleq N$,
we have $YN=G$ because $|G:N|=p$ for some prime number $p$.
It implies that $|Y: Y\cap N|=|Y/(Y\cap N)|=|YN/N|=|G/N|=p$, we can repeat the operations on $M'_{Y, Y\cap N}$ as the Proposition 4.1.

(2) To compute $M_{G,N}'$, we need compute $M_{Y, Y\cap N}'$ for every $Y\lneq G$. Since $Y\lneq G$,
thus we can get $M_{G,N}'$ by finite steps.
\end{remark}

\begin{remark}It is easy to see that
$$\sum_{C\leq N, ~C~ \mathrm{is~ cyclic}}\varphi(|C|)=|N|.$$
\end{remark}

\section{\bf A class poset of subgroups of $G$}

To compute $M_{Y, Y\cap N}'$ for every $Y\lneq G$, we define a new class poset of subgroups of $G$ in this section.
And we find the relation between $M_{G,N}'$ and this class poset.

\begin{definition} Let $G$ be a finite group and $N\unlhd G$. Let $C$ be a cyclic subgroup of $N$, define
$$\mathfrak{T}_{C}(G):=\{X|C\leq X\lneq G, X\nleq N\}.$$
We can see that $\mathfrak{T}_{C}(G)$ is a poset ordered by inclusion. We can consider poset $\mathfrak{T}_{C}(G)$
as a category with one morphism $Y\rightarrow Z$ if $Y$ is a subgroup of $Z$. We set $N(\mathfrak{T}_{C}(G))$
is the nerve of the category $\mathfrak{T}_{C}(G)$ and $|N(\mathfrak{T}_{C}(G))|$ is  the geometric realization
of $N(\mathfrak{T}_{C}(G))$. More detail of topology can be seen in \cite{DH}.
\end{definition}

\begin{remark}
Let $A,B,D\in \mathfrak{T}_{C}(G)$, if
$A\leq B, A\leq D$, then $B\cap D\in \mathfrak{T}_{C}(G)$.
\end{remark}

Since we use the Euler characteristic of $|N(\mathfrak{T}_{C}(G))|$ in Proposition 5.4, thus
we recall the definition of the Euler characteristic as following:
\begin{definition}\cite[\S 22]{M} The Euler characteristic (or Euler number) of a finite complex $K$ is defined, classically,
by the equation
$$\chi(K) =\sum_{i}(-1)^i \mathrm{rank}(C_i(K)).$$
Said differently, $\chi(K)$ is the alternating sum of the number of simplices of $K$ in
each dimension.
\end{definition}

\begin{proposition}Let $G$ be a finite group and $N\unlhd G$ such that $|G:N|=p$ for some prime number $p$.
Then
\begin{eqnarray*}
M_{G,N}'
&=&-\sum_{C\leq N, ~C~ \mathrm{is~ cyclic}}\varphi(|C|)-\sum_{Y\lneq G,Y\nleq N}M'_{Y, Y\cap N}\\
&=&-\sum_{C\leq N, ~C~ \mathrm{is~ cyclic}}(1-\chi(|N(\mathfrak{T}_{C}(G))|)\cdot \varphi(|C|)).
\end{eqnarray*}
Here, $|N(\mathfrak{T}_{C}(G))|$ is a simplicial complex associated to the poset $\mathfrak{T}_{C}(G)$, and
$\chi(|N(\mathfrak{T}_{C}(G))|)$ is the Euler characteristic of the space $|N(\mathfrak{T}_{C}(G))|$.
\end{proposition}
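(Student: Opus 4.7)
The plan is to prove the second equality in Proposition 5.4 by induction on $|G|$, using the recursive formula of Proposition 4.2 as the inductive engine. For every $Y\lneq G$ with $Y\nleq N$ we have $|Y : Y\cap N|=|YN/N|=p$, so the induction hypothesis applied to the pair $(Y, Y\cap N)$ gives
\[
M'_{Y, Y\cap N} = -\sum_{\substack{C\leq Y\cap N\\ C\text{ cyclic}}}\bigl(1-\chi(|N(\mathfrak{T}_{C}(Y))|)\bigr)\varphi(|C|).
\]
I would substitute this into the recursive formula from Proposition 4.2 and interchange the order of summation, using that $C\leq Y\cap N$ is equivalent to $C\leq Y$ whenever $C\leq N$.

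A crucial bookkeeping observation is $\mathfrak{T}_{C}(Y) = \{Z\in\mathfrak{T}_{C}(G):Z\lneq Y\}$ for $Y\in\mathfrak{T}_{C}(G)$, which holds because for $Z\leq Y$ the conditions $Z\leq Y\cap N$ and $Z\leq N$ coincide. Using this, after the swap $M'_{G,N}$ takes the form
\[
-\sum_{\substack{C\leq N\\ C\text{ cyclic}}}\varphi(|C|)\Biggl(1-\sum_{Y\in\mathfrak{T}_{C}(G)}\bigl(1-\chi(|N(\mathfrak{T}_{C}(Y))|)\bigr)\Biggr),
\]
so comparing with the target formula reduces the whole induction to the purely order-theoretic identity
\[
\chi(|N(P)|) = \sum_{Y\in P}\bigl(1-\chi(|N(P_{<Y})|)\bigr)
\]
for any finite poset $P$ (applied with $P = \mathfrak{T}_{C}(G)$ and $P_{<Y} = \mathfrak{T}_{C}(Y)$). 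I would prove this by grouping the nonempty chains of $P$ according to their maximum element $Y$: chains with top $Y$ correspond bijectively to chains of $P_{<Y}$ (including the empty one, which contributes the summand $1$), and the remaining alternating-sign terms extract $-\chi(|N(P_{<Y})|)$.

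For the base case I would take $G$ whose every proper subgroup lies in $N$ (so $N$ is the unique maximal subgroup); then the recursive sum in Proposition 4.2 is empty, every $\mathfrak{T}_{C}(G)$ is empty, and both sides collapse to $-\sum_{C}\varphi(|C|)$. The main obstacle is less conceptual than notational: keeping the summation indices straight through the swap, and correctly identifying $\mathfrak{T}_{C}(Y)$ with $(\mathfrak{T}_{C}(G))_{<Y}$. Once these are settled, the poset identity above is essentially immediate and closes the induction.
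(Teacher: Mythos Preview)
Your proposal is correct and follows essentially the same route as the paper. The paper proves the identity by repeatedly substituting Proposition~4.2 into itself (unfolding $M'_{Y,Y\cap N}$, then $M'_{Y_1,Y_1\cap N}$, etc.) and observing that the resulting alternating sum over chains $Y\gneq Y_1\gneq\cdots$ in $\mathfrak{T}_C(G)$ is exactly $\chi(|N(\mathfrak{T}_C(G))|)\cdot\varphi(|C|)$; your induction on $|G|$ together with the poset identity $\chi(|N(P)|)=\sum_{Y\in P}\bigl(1-\chi(|N(P_{<Y})|)\bigr)$ is simply the formalised one-step version of that same unrolling, and your proof of the poset identity by grouping chains by their maximum is precisely the chain-counting the paper carries out implicitly.
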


\begin{proof}Let $Y\lneq G$ and $Y\nleq N$, since $YN=G$, we have $Y/(Y\cap N)\cong G/N$. So by the Proposition 4.2
and the Remark 4.3, we also have
\begin{eqnarray*}
M_{Y, Y\cap N}'
&=&-\sum_{C\leq Y\cap N, ~C~ \mathrm{is~ cyclic}}\varphi(|C|)-\sum_{Y_1\lneq Y,Y_1\nleq Y\cap N}M'_{Y_1, Y_1\cap N}\\
&=&-\sum_{C\leq Y\cap N, ~C~ \mathrm{is~ cyclic}}\varphi(|C|)-\sum_{Y_1\lneq Y,Y_1\nleq N}M'_{Y_1, Y_1\cap N}.
\end{eqnarray*}
Here, we also have  $Y_{1}N=G$ because $Y_1\nleq N$ and $|G:N|=p$. We also repeat the operations of the
Proposition 4.2 on $M'_{Y_1, Y_1\cap N}$ by the Remark 4.3.
So
\begin{eqnarray*}
&~&\sum_{Y\lneq G,Y\nleq N}M'_{Y, Y\cap N}\\
&=&\sum_{Y\lneq G,Y\nleq N}(-\sum_{C\leq Y\cap N, ~C~ \mathrm{is~ cyclic}}\varphi(|C|)-\sum_{Y_1\lneq Y,Y_1\nleq N}M'_{Y_1, Y_1\cap N})\\
&=&-\sum_{Y\lneq G,Y\nleq N}(\sum_{C\leq Y\cap N, ~C~ \mathrm{is~ cyclic}}\varphi(|C|))\\
&~&-\sum_{Y\lneq G,Y\nleq N}\sum_{Y_1\lneq Y,Y_1\nleq N}M'_{Y_1, Y_1\cap N}\\
&=&-\sum_{Y\lneq G,Y\nleq N}(\sum_{C\leq Y\cap N, ~C~ \mathrm{is~ cyclic}}\varphi(|C|))\\
&~&-\sum_{Y\lneq G,Y\nleq N}\sum_{Y_1\lneq Y,Y_1\nleq N}(-\sum_{C\leq Y_1\cap N, ~C~ \mathrm{is~ cyclic}}\varphi(|C|)
-\sum_{Y_2\lneq Y_1,Y_2\nleq N}M'_{Y_2, Y_2\cap N})\\
&=&-\sum_{Y\lneq G,Y\nleq N}(\sum_{C\leq Y\cap N, ~C~ \mathrm{is~ cyclic}}\varphi(|C|))\\
&~&+\sum_{Y\lneq G,Y\nleq N}\sum_{Y_1\lneq Y,Y_1\nleq N}(\sum_{C\leq Y_1\cap N, ~C~ \mathrm{is~ cyclic}}\varphi(|C|))\\
&~&-\sum_{Y\lneq G,Y\nleq N}\sum_{Y_1\lneq Y,Y_1\nleq N}\sum_{Y_2\lneq Y_1,Y_2\nleq N}M'_{Y_2, Y_2\cap N}\\
&=&\cdots\cdots\cdots\\
&=&-\sum_{C\leq N, ~C~ \mathrm{is~ cyclic}}\sum_{i}\sum_{\sigma\in N(\mathfrak{T}_{C}(G))_{i}}(-1)^{i}\cdot \varphi(|C|))\\
&=&-\sum_{C\leq N, ~C~ \mathrm{is~ cyclic}}\chi(|N(\mathfrak{T}_{C}(G))|)\cdot \varphi(|C|)).
\end{eqnarray*}
Here, $\sigma$ is a $i$-simplex of nerve $N(\mathfrak{T}_{C}(G))$ and $\sigma$ is not degenerate.
\end{proof}

\begin{theorem}\cite{M}The Euler characteristic of a contractible space is 1.
\end{theorem}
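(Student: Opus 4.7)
The plan is to reduce the statement to the Euler--Poincar\'e principle together with the homotopy invariance of homology. Since the definition of $\chi$ given in Definition~5.3 is the alternating sum of the ranks of the simplicial chain groups, the first step is to pass from chain ranks to homology ranks.

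First, I would invoke the Euler--Poincar\'e formula. For any finite chain complex
$$0 \longrightarrow C_n \xrightarrow{\partial_n} C_{n-1} \longrightarrow \cdots \longrightarrow C_1 \xrightarrow{\partial_1} C_0 \longrightarrow 0$$
of finitely generated free abelian groups, the short exact sequences $0 \to Z_i \to C_i \to B_{i-1} \to 0$ and $0 \to B_i \to Z_i \to H_i \to 0$ give $\mathrm{rank}(C_i) = \mathrm{rank}(Z_i) + \mathrm{rank}(B_{i-1})$ and $\mathrm{rank}(Z_i) = \mathrm{rank}(B_i) + \mathrm{rank}(H_i)$. Taking the alternating sum, the $B_i$ terms telescope and one obtains
$$\sum_i (-1)^i \mathrm{rank}(C_i(K)) = \sum_i (-1)^i \mathrm{rank}(H_i(K)).$$
Applied to the simplicial chain complex of $K$, this identifies $\chi(K)$ with the alternating sum of Betti numbers.

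Next, I would use homotopy invariance of singular (equivalently, simplicial) homology. A contractible space $K$ is by definition homotopy equivalent to a point; hence $H_i(K) \cong H_i(\mathrm{pt})$ for all $i$. The homology of a point is well known to be $\mathbb{Z}$ in degree $0$ and trivial in all positive degrees, so the alternating sum equals $1$. Combining with the Euler--Poincar\'e identity from the previous step yields $\chi(K) = 1$.

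There is essentially no obstacle here, since both ingredients are classical. The only mild subtlety is the implicit hypothesis that $K$ is a \emph{finite} complex (which is required for Definition~5.3 to make sense); this is exactly the setting in which the statement is applied in the paper, namely to the nerve $|N(\mathfrak{T}_C(G))|$ of a finite poset, so no additional care is needed. The result is invoked in the sequel only to set $\chi(|N(\mathfrak{T}_C(G))|) = 1$ whenever the poset $\mathfrak{T}_C(G)$ admits a contracting structure (for instance a maximum element or a meet--closure argument of the kind hinted at in Remark~5.2).
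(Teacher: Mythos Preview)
Your proposal is correct and follows essentially the same route as the paper: both arguments reduce to the fact that a contractible space is homotopy equivalent to a point and that the Euler characteristic is a homotopy invariant. The paper's proof simply asserts this invariance in one line, whereas you unpack it via the Euler--Poincar\'e identity $\sum_i(-1)^i\mathrm{rank}\,C_i=\sum_i(-1)^i\mathrm{rank}\,H_i$ together with the homotopy invariance of homology; this extra detail is a genuine improvement, since Definition~5.3 gives $\chi$ as an alternating sum of chain ranks, from which homotopy invariance is not immediate.
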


\begin{proof}Since the Euler characteristic is a topological invariant and the contractible space
is homotopy-equivalent to a point, thus we have the Euler characteristic of a contractible space is 1.
\end{proof}

\begin{proposition}Let $G$ be a finite group and $G$ be not cyclic. Let $N\unlhd G$ such that $|G:N|=p$ for some prime number $p$.
If the space $|N(\mathfrak{T}_{C}(G))|$ is contractible for each cyclic subgroup $C$ of $N$, then
$m_{G, N}= 0$.
\end{proposition}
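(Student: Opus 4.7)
The plan is to show that the contractibility hypothesis forces $m_{G,N}' = 0$ via the Euler-characteristic formula of Proposition 5.4, and then to deduce $m_{G,N} = 0$ from the elementary splitting identity $m_{G,N} + m_{G,N}' = m_{G,G}$ combined with the fact that $m_{G,G} = 0$ when $G$ is not cyclic.

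First, I would invoke Theorem 5.5 on each cyclic subgroup $C \leq N$: by hypothesis $|N(\mathfrak{T}_C(G))|$ is contractible, hence $\chi(|N(\mathfrak{T}_C(G))|) = 1$. Substituting this into the closed formula
$$M_{G,N}' = -\sum_{C\leq N, ~C~ \mathrm{is~ cyclic}} \bigl(1-\chi(|N(\mathfrak{T}_{C}(G))|)\bigr)\cdot \varphi(|C|)$$
furnished by Proposition 5.4 makes every summand vanish, so $M_{G,N}' = 0$, and therefore $m_{G,N}' = M_{G,N}'/|G| = 0$.

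Next, I would appeal to the identity recorded at the start of Section 4, namely
$$m_{G,N} + m_{G,N}' = \frac{1}{|G|}\sum_{X\leq G}|X|\mu(X,G) = m_{G,G},$$
which is immediate from the definition of $m_{G,N}'$ and the decomposition of the sum over $X\leq G$ according to whether $XN = G$ or $XN \neq G$. Since $G$ is assumed not cyclic, Proposition 2.8 gives $m_{G,G} = 0$. Combining this with the vanishing $m_{G,N}' = 0$ obtained above yields $m_{G,N} = 0$, as required.

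Because all the substantive work has already been done in Proposition 5.4 (the recursive unwinding of $M_{G,N}'$ as an alternating sum over chains in $\mathfrak{T}_{C}(G)$, expressing it through $\chi(|N(\mathfrak{T}_C(G))|)$) and in Theorem 5.5 (the topological fact that contractible spaces have Euler characteristic $1$), there is no real obstacle: the statement is essentially a two-line corollary of those results together with Proposition 2.8. The only mild care required is to read the formula in Proposition 5.4 as a sum of $\bigl(1-\chi(|N(\mathfrak{T}_C(G))|)\bigr)\varphi(|C|)$ rather than a sum of $1-\chi(|N(\mathfrak{T}_C(G))|)\cdot \varphi(|C|)$, which is what the derivation in Proposition 5.4 actually produces.
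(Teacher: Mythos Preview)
Your proposal is correct and follows essentially the same route as the paper's own proof: invoke Proposition~5.4 to express $M_{G,N}'$ via the Euler characteristics, use Theorem~5.5 to set each $\chi(|N(\mathfrak{T}_C(G))|)=1$ so that $M_{G,N}'=0$, and then conclude from $m_{G,N}+\frac{1}{|G|}M_{G,N}'=m_{G,G}=0$ (the latter by Proposition~2.8 since $G$ is not cyclic). Your remark about the intended parenthesisation $(1-\chi)\cdot\varphi(|C|)$ in the formula of Proposition~5.4 is also apt.
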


\begin{proof}By the Proposition 4.4, we have
\begin{eqnarray*}
M_{G,N}'
&=&-\sum_{C\leq N, ~C~ \mathrm{is~ cyclic}}\varphi(|C|)-\sum_{Y\lneq G,Y\nleq N}M'_{Y, Y\cap N}\\
&=&-\sum_{C\leq N, ~C~ \mathrm{is~ cyclic}}(1-\chi(|N(\mathfrak{T}_{C}(G))|)\cdot \varphi(|C|)).
\end{eqnarray*}
Since for each cyclic subgroup $C$ of $N$, we have $|N(\mathfrak{T}_{C}(G))|$ is contractible.
It implies $\chi(|N(\mathfrak{T}_{C}(G)))=1$ by the Theorem 5.5, thus $M_{G,N}'=0$.

By the definition of $M_{G,N}'$, we know that
$$m_{G,N}+\frac{1}{|G|}M_{G,N}'=m_{G,G}=0.$$
So $m_{G,N}=0$.
\end{proof}

Now, we post the following problems about $S_n$.
\begin{conjecture}Let $C$ be a cyclic subgroup of $A_{n}$, then the space $|N(\mathfrak{T}_{C}(S_{n}))|$
is contractible for $n\geq 5$.
\end{conjecture}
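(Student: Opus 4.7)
The plan is to establish contractibility of $|N(\mathfrak{T}_C(S_n))|$ by a nerve-of-cover argument, reducing the problem to a combinatorial question about joint maximal overgroups of $C$ in $S_n$.

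First, for each odd permutation $\sigma\in S_n\setminus A_n$ with $\langle C,\sigma\rangle$ a proper subgroup of $S_n$, I would introduce the subposet
$$U_\sigma:=\{X\in\mathfrak{T}_C(S_n)\mid\sigma\in X\}.$$
Since $\langle C,\sigma\rangle\in U_\sigma$ is contained in every $X\in U_\sigma$, this subposet has a minimum, so its order complex $|N(U_\sigma)|$ is a cone with apex $\langle C,\sigma\rangle$ and is therefore contractible. The family $\{U_\sigma\}$ covers $\mathfrak{T}_C(S_n)$: any $X$ in the poset is not contained in $A_n$, so it contains some odd $\sigma$, and then $\langle C,\sigma\rangle\leq X\lneq S_n$ is automatic. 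Moreover, every non-empty finite intersection $U_{\sigma_1}\cap\cdots\cap U_{\sigma_k}$ has the minimum $\langle C,\sigma_1,\ldots,\sigma_k\rangle$, and hence is contractible as well.

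Second, by Bj\"orner's nerve theorem, $|N(\mathfrak{T}_C(S_n))|$ is homotopy equivalent to the nerve $\mathcal{N}$ of the cover $\{U_\sigma\}$, namely the simplicial complex whose vertices are the odd $\sigma\in S_n$ satisfying $\langle C,\sigma\rangle\lneq S_n$ and whose simplices are the finite subsets $\{\sigma_0,\ldots,\sigma_k\}$ with $\langle C,\sigma_0,\ldots,\sigma_k\rangle\lneq S_n$. The task thus reduces to showing that $\mathcal{N}$ is contractible. A direct attack would be to exhibit a cone point: a single odd $\sigma_0$ (for instance a transposition supported in the complement of $\mathrm{supp}(C)$ when $|\mathrm{supp}(C)|\leq n-2$, or a carefully chosen odd element of $N_{S_n}(C)$ when $C$ is transitive on its support) such that $\langle C,\sigma_0,\sigma\rangle\lneq S_n$ for every admissible vertex $\sigma$. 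If no uniform cone point exists, one can iterate the nerve-of-cover construction on $\mathcal{N}$, or apply Quillen's Theorem~A to a map from $\mathfrak{T}_C(S_n)$ to a simpler poset built out of maximal subgroups of $S_n$ containing $C$ and an odd element.

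The main obstacle lies in this last step. The problematic $\sigma$ are exactly those for which $\langle C,\sigma_0,\sigma\rangle=S_n$, i.e.\ those that together with $\sigma_0$ and $C$ escape every maximal subgroup of $S_n$. By the O'Nan--Scott classification one can in principle enumerate the maximal overgroups of a given cyclic $C$ in $S_n$, but producing a single $\sigma_0$ jointly contained with every other admissible $\sigma$ in some common maximal overgroup of $C$ requires a delicate case analysis on the cycle type of a generator of $C$. The hypothesis $n\geq 5$ should enter here to rule out the small exceptional configurations peculiar to $S_3$ and $S_4$. As a consistency check, Bouc's theorem that $S_n$ is a $B$-group gives $m_{S_n,A_n}=0$, so by Theorem~1.3 the sum $\sum_{C\leq A_n,\,C\text{ cyclic}}\bigl(1-\chi(|N(\mathfrak{T}_C(S_n))|)\bigr)\varphi(|C|)$ must vanish; the conjecture is the much stronger termwise assertion that $\chi(|N(\mathfrak{T}_C(S_n))|)=1$ for every cyclic $C\leq A_n$.
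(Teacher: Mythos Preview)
The statement you are attempting to prove is \emph{Conjecture~5.7} in the paper: it is posed as an open problem, and the paper gives no proof of it. The only evidence the authors supply is the special case $n=5$ (their Theorem~7.2), where they verify by hand---using the O'Nan--Scott list of maximal subgroups of $S_5$---that $|N(\mathfrak{T}_C(S_5))|$ is \emph{connected} for each cyclic $C\le A_5$, and then invoke their Proposition~6.1 (a Mayer--Vietoris induction peculiar to these posets) to conclude that the Euler characteristic equals~$1$. Even in that case they obtain acyclicity, not contractibility. So there is no ``paper's own proof'' to compare against for general $n$.

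Your nerve-of-cover reduction is sound: the subposets $U_\sigma$ are upward closed, their order complexes form a closed cover of $|N(\mathfrak{T}_C(S_n))|$ (every chain lies in $U_\sigma$ for any odd $\sigma$ in its least element), and each nonempty finite intersection has the minimum $\langle C,\sigma_1,\dots,\sigma_k\rangle$, so Bj\"orner's nerve theorem applies and gives the homotopy equivalence with $\mathcal{N}$. This is a cleaner framework than the paper's inductive Mayer--Vietoris argument. However, the step you flag as ``the main obstacle'' is exactly the content of the conjecture, and your proposed cone-point strategy fails already for $C=1$: if $n$ is even, take $\sigma_0=(1\,2)$ and $\sigma=(1\,2\,\cdots\,n)$; then $\langle\sigma_0,\sigma\rangle=S_n$, so $\sigma_0$ is not joined to $\sigma$ in $\mathcal{N}$ and cannot be a cone point. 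Iterating the nerve construction or appealing to Quillen's Theorem~A does not obviously help without a new structural input about maximal overgroups of $C$ in $S_n$. In short, your reduction is correct and useful, but what remains is precisely the open part of the conjecture; you have not closed the gap, and neither has the paper.
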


\begin{remark}If the Conjecture 5.7 holds, then we have $m_{S_n, A_n}=0$. That is $\beta(S_n)=S_n$
is not solvable for $n\geq 5$. It implies that the Conjectures 1.1-2 hold when $G$ is $S_n$ for  $n\geq 5$.
\end{remark}

\section{\bf Connected simplicial complex of a poset}

To compute $\chi(|N(\mathfrak{T}_{C}(G))|)$, we prove the following theorem. Here, the background of the topology, we
refer to \cite{Ht,M}.
\begin{proposition}Set $\mathcal{C}=\mathfrak{T}_{C}(G)$. Let $N(\mathcal{C})$ be the nerve
of category $\mathcal{C}$. If the space $|N(\mathcal{C})|$ is connected, then
$H_{0}(|N(\mathcal{C})|)\cong Z$ and $H_{i}(|N(\mathcal{C})|)=0$ for $i\geq 1$.
\end{proposition}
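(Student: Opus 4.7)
The first claim, $H_{0}(|N(\mathcal{C})|)\cong \mathbb{Z}$, is a direct consequence of the hypothesis: the nerve of a finite poset is a connected CW complex whenever the Hasse diagram is connected, and for any connected CW complex the zeroth singular homology is free of rank one. So this half of the proposition simply records that identification.

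For the substantial half, namely $H_{i}(|N(\mathcal{C})|)=0$ for $i\geq 1$, the plan is to promote the connectedness hypothesis to contractibility by exploiting the conditional meet structure of $\mathfrak{T}_{C}(G)$ supplied by Remark 5.2. Once contractibility is established, all reduced homology vanishes by homotopy invariance of singular homology. The key observation is that for every $x\in\mathcal{C}$, the sub-posets $\mathcal{C}_{\leq x}=\{y\in\mathcal{C}: y\leq x\}$ and $\mathcal{C}_{\geq x}=\{y\in\mathcal{C}: y\geq x\}$ have an extremum, so their order complexes are cones and hence contractible.

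The approach I would follow is a nerve-cover argument. Let $M_{1},\dots,M_{k}$ be the maximal elements of $\mathcal{C}$, and cover $\mathcal{C}=\bigcup_{i=1}^{k}\mathcal{C}_{\leq M_{i}}$. Each piece is contractible by the previous remark. For a subset $S\subseteq\{1,\dots,k\}$, any element in $\bigcap_{i\in S}\mathcal{C}_{\leq M_{i}}$ is a common lower bound of $\{M_{i}\}_{i\in S}$ inside $\mathcal{C}$; applying Remark 5.2 inductively then shows that the group-theoretic intersection $\bigcap_{i\in S}M_{i}$ lies in $\mathcal{C}$ and is the maximum of the intersection, so the intersection is again contractible whenever nonempty. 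By the nerve theorem, $|N(\mathcal{C})|$ is homotopy equivalent to the nerve $\Delta$ of this cover, a simplicial complex on vertices $M_{1},\dots,M_{k}$ whose simplices record which collections of maximal elements admit a common lower bound in $\mathcal{C}$.

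The main obstacle is then showing that $\Delta$ itself is contractible. I would attempt an induction on $k$: remove one maximal element $M_{k}$ and show that $\Delta$ is obtained from the sub-complex corresponding to the remaining maximal elements by gluing along a contractible link, using the connectedness of $|N(\mathcal{C})|$ to ensure that the star of $M_{k}$ meets the rest non-emptily and contractibly. If this direct cellular approach stalls, an alternative would be to apply Quillen's Theorem A to the inclusion $\mathcal{C}_{\leq M_{k}}\hookrightarrow\mathcal{C}$ and transfer contractibility, using the conditional meet property to verify that each fibre is contractible. Either way, the heart of the argument is the translation, via Remark 5.2, of connectedness of the Hasse diagram into the existence of enough common lower bounds to collapse the cover nerve $\Delta$ to a point.
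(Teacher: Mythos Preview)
Your overall plan parallels the paper's: cover $\mathcal{C}$ by the principal down-sets $\mathcal{C}_{\leq M_i}$ below the maximal elements, peel off one maximal element at a time, and induct, using Remark~5.2 to guarantee that every nonempty finite intersection of these pieces has a greatest element and is therefore a cone. The paper runs this induction directly with Mayer--Vietoris, writing
\[
|N(\mathcal{C})|=|N(\mathcal{C}_{M_1})|\cup|N(\mathcal{C}_{M_1}')|
\quad\text{glued along}\quad |N(\breve{\mathcal{C}}_{M_1})|,
\]
whereas you first pass to the cover nerve $\Delta$ via the nerve theorem and then attempt the same vertex-by-vertex collapse on $\Delta$. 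That is a packaging difference rather than a new idea; the nerve-theorem reduction is a tidy way to organise the very same induction, and your target of contractibility is slightly stronger than the paper's acyclicity statement.

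The substantive gap in your sketch lies exactly where the paper does its real work. Claiming that connectedness of $|N(\mathcal{C})|$ makes the link of $M_k$ in $\Delta$ contractible is not automatic: the conditional-meet property of Remark~5.2 by itself does not force this, and one genuinely has to argue that the intersection piece inherits connectedness so that the induction hypothesis can be reapplied. The paper's analogue of your link is
\[
\breve{\mathcal{C}}_{M_1}=\mathcal{C}_{\leq M_1}\cap\Bigl(\bigcup_{i\geq 2}\mathcal{C}_{\leq M_i}\Bigr),
\]
and the paper shows $|N(\breve{\mathcal{C}}_{M_1})|$ is again connected by taking a zigzag in $\mathcal{C}_{M_1}'$ between any two $M_1\cap M_i$ and $M_1\cap M_j$ and, at each step, intersecting with $M_1$ via Remark~5.2 to push the path down into $\breve{\mathcal{C}}_{M_1}$; only then is the induction hypothesis invoked. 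Your Quillen--Theorem~A fallback does not sidestep this: for the inclusion $\iota:\mathcal{C}_{\leq M_k}\hookrightarrow\mathcal{C}$, the comma category $\iota\downarrow X$ is $\{Y\in\mathcal{C}:Y\leq M_k\cap X\}$, which is empty whenever $M_k\cap X\leq N$, so Theorem~A does not apply. In short, the one ingredient missing from your proposal is the connectivity argument for $\breve{\mathcal{C}}_{M_1}$, which the paper supplies explicitly.
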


\begin{proof}Let $X\in \mathrm{Ob}(\mathcal{C})$, set
 $\mathcal{C}_{X}$ be the full subcategory of $\mathcal{C}$ with object set
$\{Y\in \mathrm{Ob}(\mathcal{C})|Y\leq X\}$.

Let $M_1, M_2,\ldots, M_n$ be all maximal elements of the poset $\mathcal{C}$.

\textbf{Case 1.} $n=2$. We can see that
$$|N(\mathcal{C})|\cong|N(\mathcal{C}_{M_1})|\sqcup_{|N(\mathcal{C}_{M_1\cap M_2})|}|N(\mathcal{C}_{M_2})|.$$
Here, $M_1\cap M_2\in \mathrm{Ob}(\mathcal{C})$ because $|N(\mathcal{C})|$ is connected.

We can see $|N(\mathcal{C}_{M_1})|, |N(\mathcal{C}_{M_1\cap M_2})|, |N(\mathcal{C}_{M_2})|$
are all contractible because each category has a terminal object.
Hence by\cite[Theorem 25.1]{M}, we have $H_{0}(|N(\mathcal{C})|)\cong Z$ and $H_{i}(|N(\mathcal{C})|)=0$ for $i\geq 1$.

\textbf{Case 2.} $n\gneq 2$. We define a graph $\mathcal{D}$ as follow:

(1) The vertexes
$\mathcal{D}$ of the graph are $\{M_1, M_2,\ldots, M_n\}$;

(2) There exists a edge between two vertexes $M_i, M_j$ if
and only if there exists $X\in \mathrm{Ob}(\mathcal{C})$ such that $X\leq M_i$ and $X\leq M_j$ (It implies
$M_i\cap M_j\in \mathrm{Ob}(\mathcal{C})$ by the Remark 5.2).

Since $|N(\mathcal{C})|$ is connected, thus the graph $\mathcal{D}$ is connected.
Since the graph $\mathcal{D}$ is connected, we can delete one  vertex of  $\mathcal{D}$ and remaining part of
the graph $\mathcal{D}$ is also connected. Here, we can set that this deleted vertex is $M_1$.

First, set $\mathcal{C}_{M_1}'$ be the full subcategory of $\mathcal{C}$ with object's set
$\{X\in \mathrm{Ob}(\mathcal{C})|X\leq M_i~ \mathrm{for}~ \mathrm{some}~ 2\leq i\leq n\}$.
And set $\breve{\mathcal{C}}_{M_1}$ be the full subcategory of $\mathcal{C}$ with object's set
$\{X\in \mathrm{Ob}(\mathcal{C})|X\leq M_1\}\cap\{X\in \mathrm{Ob}(\mathcal{C})|X\leq M_i~ \mathrm{for}~ \mathrm{some}~ 2\leq i\leq n\}$.
Then we have
$$|N(\mathcal{C})|\cong|N(\mathcal{C}_{M_1})|\sqcup_{|N(\breve{\mathcal{C}}_{M_1})|}|N(\mathcal{C}_{M_1}')|.$$

We can see $|N(\mathcal{C}_{M_1}')|$ is connected, it implies
$H_{0}(|N(\mathcal{C}_{M_1}')|)\cong Z$ and $H_{i}(|N(\mathcal{C}_{M_1}')|)=0$ for $i\geq 1$ by induction on $n$.

Now, we want to prove that $|N(\breve{\mathcal{C}}_{M_1})|$ is also connected.
By the definition of $\breve{\mathcal{C}}_{M_1}$, each maximal object of $\breve{\mathcal{C}}_{M_1}$
is in the set $\{M_1\cap M_2, M_1\cap M_3,\ldots M_1\cap M_n\}$.

Since $|N(\mathcal{C}_{M_1}')|$ is connected, thus we have for each $M_1\cap M_i, M_1\cap M_j$, $2\leq i\neq j\leq n$,
there exist
$$K_1, K_2,\ldots, K_s\in \{M_1\cap M_2, M_1\cap M_3,\ldots M_1\cap M_n\},~ and~$$
$$T_1,T_2,\ldots, T_{s+1}\in \mathrm{Ob}(\mathcal{C}_{M_1}')$$
such that
$$K_1=M\cap M_i, K_s=M\cap  M_j ,~\mathrm{and},~$$
$$\mathrm{either}~ T_r\leq K_r, T_r\leq K_{r+1},$$
$$\mathrm{or}~ K_r\leq T_r, K_{r+1}\leq T_r $$
for each $1\leq r\leq s$.
If $T_r\leq K_r, K_{r+1}$, that is $T_r\in \mathrm{Ob}(\breve{\mathcal{C}}_{M_1})$.
If $K_r, K_{r+1}\leq T_r$, thus $K_r\leq M_1, T_r$, it implies
$M_1\cap T_r\in \mathrm{Ob}(\mathcal{C})$. Thus
$M_1\cap T_r\in \mathrm{Ob}(\breve{\mathcal{C}}_{M_1})$ and  $K_r, K_{r+1}\leq M_1\cap T_r$.
That is $|N(\breve{\mathcal{C}}_{M_1})|$ is connected. It implies
$H_{0}(|N(\breve{\mathcal{C}}_{M_1}')|)\cong Z$ and $H_{i}(|N(\breve{\mathcal{C}}_{M_1}')|)=0$ for $i\geq 1$ by induction on $n$.
Since $|N(\mathcal{C})|\cong|N(\mathcal{C}_{M_1})|\sqcup_{|N(\breve{\mathcal{C}}_{M_1})|}|N(\mathcal{C}_{M_1}')|$,
thus we have $H_{0}(|N(\mathcal{C})|)\cong Z$ and $H_{i}(|N(\mathcal{C})|)=0$ for $i\geq 1$.
\end{proof}

\begin{remark}By the above theorem, if the space $|N(\mathcal{C})|$ is connected, then $\chi(|N(\mathcal{C})|)=1$.
\end{remark}

\section{\bf  To compute $m_{S_5, A_5}$ }

First, we list some results about symmetric group.
Let $S_{n}$ be a symmetric group of degree $n$ and $A_{n}$ be a  alternating group of degree $n$.

\begin{theorem}\cite[Appendix]{LPS, AS}Let $S_n$  act on a set $\Omega$ of size  $n$. Then  every maximal subgroup $G(\neq A_n)$ of $S_n$, is of one of the types (a)-(f)  below:

(a) $G=S_m\times S_k$,  with  $n=m+k$  and $ m\neq k$  (intransitive  case);

(b) $ G=S_m\wr S_k$, with  $n =m^k$,  $m\gneq  1$ and  $k\gneq 1$  (imprimitive
case);

(c) $G =  AGL_k(p)$, with  $n =pk$  and  $p$  prime  (affine  case);

(d)  $G = T^k \cdot(\mathrm{Out}T\times S_k)$, with  $T$  a nonabelian simple group,
$k\geq 2$ and $n=|T|^{k-1}$ (diagonalcase);

(e) $G = S_m\wr S_k$, with  $n  =  m^k$, $m\gneq 5$ and $ k\gneq 1$, excluding the
case where $X=  A_n$,  and $G$ is imprimitive on $\Omega$  (wreath  case-see Remark 2
below);

(f)  $T \unlhd G \leq \mathrm{Aut}T$,  with  $T$  a nonabelian simple group,  $T\neq A_n$,  and $G$
acting primitively on  $\Omega$ (almost simple  case).
\end{theorem}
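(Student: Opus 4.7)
The plan is to follow the classical trichotomy underlying the O'Nan--Scott theorem, splitting the analysis of a maximal subgroup $G\leq S_n$ (with $G\neq A_n$) according to whether $G$ acts on $\Omega$ intransitively, transitively but imprimitively, or primitively. These three possibilities are exhaustive for any proper subgroup, and maximality will force $G$ to equal a canonical stabilizer in each regime.

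First I would handle the intransitive case. If $G$ has orbits $\Omega_1,\ldots,\Omega_r$ on $\Omega$, then $G$ lies in the Young subgroup $S_{|\Omega_1|}\times\cdots\times S_{|\Omega_r|}$, and maximality forces equality. If two orbits have the same size $m$, one can strictly enlarge this product by adjoining the transposition that swaps them, contradicting maximality; a similar fusion argument forces $r=2$. This yields case (a) with $n=m+k$ and $m\neq k$. For the transitive-imprimitive case, pick a nontrivial $G$-invariant block system $\mathcal{B}$ with blocks of size $m$ (it exists since $G$ is imprimitive); then $G$ is contained in the stabilizer $S_m\wr S_k$ of $\mathcal{B}$, and maximality forces $G=S_m\wr S_k$, giving case (b). To make these steps rigorous, one chooses $\mathcal{B}$ to be a \emph{maximal} $G$-invariant partition, guaranteeing that $S_m\wr S_k$ is itself a maximal subgroup of $S_n$ and not contained in $A_n$.

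The primitive case is the heart of the classification; here I would invoke the O'Nan--Scott structure theorem. The socle $N=\soc(G)$ of a primitive group $G$ is a direct product $T^k$ of isomorphic simple groups. If $T$ is cyclic of prime order $p$, then $N\cong(\mathbb{Z}/p)^k$ acts regularly on $\Omega$ of size $p^k$, and $G$ embeds into the affine group $AGL_k(p)$, which yields (c). If $T$ is nonabelian, I would split into three subcases according to the action of $N$: the diagonal action, in which $T$ acts on $T^{k-1}$ via cosets of the diagonal subgroup, producing (d); the product (wreath) action on $m^k$ points with $m\geq 5$ (the lower bound being what keeps the image outside $A_{m^k}$), giving (e); and the almost simple case $T\unlhd G\leq \Aut(T)$ acting primitively on $\Omega$ with $T\neq A_n$, giving (f).

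The main obstacle is the primitive case: establishing not merely containment in one of the listed groups, but genuine maximality of these candidates in $S_n$, as opposed to being already contained in $A_n$ or inside some unexpected overgroup. Ruling out such pathologies requires the classification of finite simple groups and a delicate study of the permutation actions of almost simple groups, following the analyses of O'Nan, Scott, Aschbacher, and Liebeck--Praeger--Saxl. In particular, the bound $m\geq 5$ in case (e) and the careful exclusion $T\neq A_n$ in case (f) arise exactly from resolving the collisions with the alternating group, and verifying them is where the deep structural input is concentrated.
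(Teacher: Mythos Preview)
The paper does not prove this statement at all: Theorem~7.1 is simply quoted from \cite{LPS,AS} and used as a black box in the computation for $S_5$. There is therefore no ``paper's own proof'' to compare against; the authors rely entirely on the cited references.

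Your sketch follows the standard route taken in those references: split according to whether $G$ is intransitive, transitively imprimitive, or primitive, and in the primitive case invoke the O'Nan--Scott theorem to obtain the affine, diagonal, product (wreath), and almost simple types. This is exactly the architecture of the Aschbacher--Scott and Liebeck--Praeger--Saxl arguments, so your plan is the correct one. Two small remarks. First, in the imprimitive case you should be aware that the theorem as stated in the paper writes $n=m^k$, which is almost certainly a typo for $n=mk$; your own description (blocks of size $m$, $k$ blocks) is the correct one. Second, you rightly flag that the genuinely hard content---showing that the listed groups are actually maximal and not contained in $A_n$ or in one another---requires CFSG and the detailed case analysis of \cite{LPS}; a full proof cannot avoid this, and your proposal appropriately identifies it as the main obstacle rather than claiming to resolve it.
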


Now,  we consider the symmetric group $S_5$ and prove the following

\begin{theorem}Let $C$ be a cyclic subgroup of $A_{5}$, then the space $|\mathfrak{T}_{C}(S_{5})|$
is connected.
\end{theorem}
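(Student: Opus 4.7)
The plan is a case analysis on the order of $C$, which lies in $\{1,2,3,5\}$ since these are the element orders in $A_5$. Up to conjugation in $S_5$ there is a unique cyclic subgroup of each such order, so one may fix one representative per case. For each representative I would enumerate $\mathfrak{T}_C(S_5)$ explicitly, using Theorem 6.1 to control the maximal subgroups of $S_5$ distinct from $A_5$: the five copies of $S_4$ (point stabilizers), the ten copies of $S_3\times S_2$ (partition stabilizers), and the six copies of $F_{20}$. Since every proper overgroup of $C$ is contained in a maximal one, this enumeration is finite, and the task reduces to checking that the resulting poset is connected.

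The easy cases come first. If $|C|=5$, the only proper overgroup of $C$ in $S_5$ not contained in $A_5$ is $N_{S_5}(C)\cong F_{20}$, so $\mathfrak{T}_C(S_5)$ is a single object and the nerve is a point. If $|C|=3$, with $C=\langle(1\,2\,3)\rangle$, I list the objects: $\mathbb{Z}_6=\langle(1\,2\,3)(4\,5)\rangle$, $S_3=\mathrm{Sym}\{1,2,3\}$, $S_3\times S_2=\mathrm{Sym}\{1,2,3\}\times\mathrm{Sym}\{4,5\}$, and the two $S_4$ on $\{1,2,3,4\}$ and $\{1,2,3,5\}$. Both $\mathbb{Z}_6$ and $S_3$ lie in $S_3\times S_2$, and $S_3$ lies in both copies of $S_4$, so the comparability graph is connected.

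The case $|C|=2$, with $C=\langle(1\,2)(3\,4)\rangle$, is the crux. A direct count identifies the maximal members of $\mathfrak{T}_C(S_5)$ as the $S_4$ on $\{1,2,3,4\}$, the two partition stabilizers $S_3\times S_2$ for the partitions $\{1,2\}|\{3,4,5\}$ and $\{3,4\}|\{1,2,5\}$, and two copies of $F_{20}$ (each of the $15$ double transpositions lies in $30/15=2$ of the $6$ conjugates of $F_{20}$, since each $F_{20}$ contains exactly $5$ double transpositions). The Klein four group $V=\langle(1\,2),(3\,4)\rangle$ is contained in the $S_4$ and in both $S_3\times S_2$'s, while the cyclic group $T=\langle(1\,3\,2\,4)\rangle$ of order $4$, whose square is $(1\,2)(3\,4)$, is contained in the $S_4$ and in both $F_{20}$'s (it is the unique Sylow $2$-subgroup of any such $F_{20}$ containing $C$, since this Sylow is cyclic of order $4$ with $C$ as its sole subgroup of order $2$, forcing it to coincide with the unique such subgroup $T$ of $S_5$). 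These two bridges, meeting inside $S_4$, link all five maximal members.

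Finally, for $|C|=1$, the poset $\mathfrak{T}_1(S_5)$ consists of every proper subgroup of $S_5$ not contained in $A_5$. Every such $X$ contains an odd element $\sigma$, whose cyclic subgroup $\langle\sigma\rangle$ is an object of $\mathfrak{T}_1(S_5)$ below $X$; so it suffices to link the cyclic groups $\langle\sigma\rangle$ for $\sigma$ a transposition, a $4$-cycle, or an element of cycle type $(3,2)$, and I would funnel each of these into a suitable $S_4$ or $S_3\times S_2$ sharing a point-stabilizer $S_3$ with $\mathrm{Sym}\{1,2,3,4\}$, thereby connecting the whole poset to that $S_4$. The main obstacle is the case $|C|=2$: the two $F_{20}$'s contain no transposition and so do not share $V$ with the other three maximal subgroups, and the recognition that the cyclic-of-order-$4$ subgroup $T$ is the required bridge is the essential non-trivial step in the proof.
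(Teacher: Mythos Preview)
Your proposal is correct and follows essentially the same approach as the paper: a case analysis on $|C|\in\{1,2,3,5\}$, invoking the classification of maximal subgroups of $S_5$ (the paper's Theorem~7.1) and then exhibiting explicit common members of $\mathfrak{T}_C(S_5)$ linking the maximal elements. Your treatment of the case $|C|=2$ is in fact more careful than the paper's---you identify both $F_{20}$'s containing $(12)(34)$ and prove that the unique cyclic group $T=\langle(1324)\rangle$ of order $4$ over $C$ lies in each, whereas the paper explicitly handles only one $F_{20}$---and for $|C|=1$ you organize the argument bottom-up through minimal odd cyclic subgroups rather than top-down through maximal subgroups, but these are expository differences rather than a genuinely different route.
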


\begin{proof}Since $C$ is a cyclic subgroup of $A_{5}$, thus $C$ likes one of the following types:

(1) $C=1$;

(2) $C=\langle(123)\rangle$;

(3) $C=\langle(12345)\rangle$;

(4) $C=\langle(12)(34)\rangle$.

\textbf{Case 1.} $C=1$. By the Theorem 7.1, we can see that the maximal subgroup($\neq A_5$) of $S_5$ is the following types:
$$S_m\times S_k, ~ m+k=5;$$
$$AGL_1(5) \cap S_5.$$
If $M_1, M_2(\neq A_5)$ are maximal subgroups of $S_5$ and $M_1, M_2$ are type $S_m\times S_k, ~ m+k=5$,
then $M_1\cap M_2$ contains a subgroup which is isomorphic to $S_2$. And we can see one of type $AGL_1(5) \cap S_5$ as
$$\langle(12345), (2354)\rangle.$$
Let $M_3\cong S_4$ and act on the set $\{2,3,4,5\}$, then $M_3$ is a maximal subgroup of $S_5$ and
 $M_3\cap \langle(12345), (2354)\rangle\geq \langle(2354)\rangle\in \mathfrak{T}_{C}(S_{5}).$
So $|\mathfrak{T}_{C}(S_{5})|$ is connected.

\textbf{Case 2.} $C=\langle(123)\rangle$. By the Theorem 7.1, we can see that the maximal subgroup($\neq A_5$) of $S_5$ is the following type:
$$S_m\times S_k, ~ m+k=5.$$
If $M_1, M_2(\neq A_5)$ are maximal subgroups of $S_5$ and $M_1, M_2$ contain $C=\langle(123)\rangle$,
then $M_1\cap M_2$ contains a subgroup which is isomorphic to $S_3$ which acts on the set $\{1,2,3\}$.
So $|\mathfrak{T}_{C}(S_{5})|$ is connected.

\textbf{Case 3.} $C=\langle(12345)\rangle$. By the Theorem 7.1, we can see that the maximal subgroup($\neq A_5$) of $S_5$ is the following type:
$$AGL_1(5) \cap S_5.$$
We can see this type subgroup which contains $C$ is only
$$\langle(12345), (2354)\rangle.$$
So $|\mathfrak{T}_{C}(S_{5})|$ is connected.

\textbf{Case 4.} $C=\langle(12)(34)\rangle$. By the Theorem 7.1, we can see that the maximal subgroup($\neq A_5$) of $S_5$ is the following types:
$$S_m\times S_k, ~ m+k=5;$$
$$AGL_1(5) \cap S_5.$$
If $M_1, M_2(\neq A_5)$ are maximal subgroups of $S_5$ and $M_1, M_2$ are type $S_m\times S_k, ~ m+k=5$, then $M_1\cap M_2$ contains a subgroup which is isomorphic to $S_2$. And we can see one of type $AGL_1(5) \cap S_5$ as
$$\langle(25341), (1324)\rangle.$$
Let $M_3\cong S_4$ and act on the set $\{1,2,3,4\}$, then $M_3$ is a maximal subgroup of $S_5$ and
 $M_3\cap \langle(25341), (1324)\rangle\geq \langle(1324)\rangle\in \mathfrak{T}_{C}(S_{5}).$
So $|\mathfrak{T}_{C}(S_{5})|$ is connected.
\end{proof}

\begin{proposition}Let $G$ be the symmetric group $S_5$, and $N$ be the alternating group $A_5$ with $N\unlhd G$.
Then $m_{G, N}= 0$.
\end{proposition}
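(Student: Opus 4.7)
The plan is to invoke Theorem 1.3 directly. Since $S_5$ is not cyclic, that theorem produces the formula
\begin{equation*}
m_{S_5, A_5} = \frac{1}{|S_5|}\sum_{\substack{C \leq A_5\\ C\ \text{cyclic}}} \bigl(1 - \chi(|N(\mathfrak{T}_C(S_5))|)\bigr)\cdot \varphi(|C|),
\end{equation*}
so the whole task reduces to showing that $\chi(|N(\mathfrak{T}_C(S_5))|) = 1$ for every cyclic subgroup $C$ of $A_5$.

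For each such $C$, I would cite Theorem 7.2, which has already checked that $|N(\mathfrak{T}_C(S_5))|$ is connected across the four conjugacy classes of cyclic subgroups of $A_5$, namely the trivial subgroup and the subgroups generated by a $3$-cycle, a $5$-cycle, and a double transposition, respectively. Then Proposition 6.1 (equivalently, Remark 6.2) asserts that any connected nerve space of a poset of this form has $H_0\cong\mathbb{Z}$ and trivial higher homology, so its Euler characteristic equals $1$.

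Substituting $\chi = 1$ into the displayed formula makes every summand vanish, giving $m_{S_5, A_5} = 0$ immediately. The genuine content has already been handled in Theorem 7.2; this final proposition is essentially a one-step assembly of Theorem 1.3, Theorem 7.2, and the connectedness-implies-$\chi = 1$ consequence of Section 6. Consequently I anticipate no obstacle beyond what is already dispatched in Theorem 7.2.
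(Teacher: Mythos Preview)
Your proposal is correct and follows essentially the same route as the paper: the paper's own proof invokes Theorem~7.2 for connectedness, then Proposition~6.1/Remark~6.2 to obtain $\chi=1$, and finally Proposition~5.4 (equivalently Theorem~1.3, which is just Proposition~5.4 combined with $m_{G,N}+m'_{G,N}=m_{G,G}=0$) to conclude $m_{S_5,A_5}=0$. The only cosmetic difference is that you cite Theorem~1.3 directly whereas the paper goes through $M'_{G,N}$ via Proposition~5.4; the content is identical.
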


\begin{proof} By the Theorem 7.2, the Theorem 6.1 and the Lemma 6.2, we know that 
 $|\mathfrak{T}_{C}(G)|=1$ for each cyclic subgroup of $N\cong A_5$.
Hence by the Proposition 5.4,  we have 
$M_{G,N}'=0$. That is $m_{G,N}=0$.
\end{proof}

\begin{remark} In \cite{Bo3}, Bouc had proved that $S_n$ is a $B$-group. And by using \cite{Ba}, we also get that $S_n$
is a $B$-group.
\end{remark}

\textbf{ACKNOWLEDGMENTS}\hfil\break
The authors would like to thank Prof. S. Bouc for his numerous discussion in Beijing in Oct. 2014.
And the second author would like to thank  Prof. C. Broto for his constant encouragement in Barcelona in Spain.
Actually, the part of the idea of the Proposition 6.1 is due to Prof. C. Broto.


\begin{thebibliography}{13}



\bibitem{Ai} M. Aigner, Combinatorial theory, Springer-Verlag, Berlin, (1979)

\bibitem{AS}  M.  Aschbacher, L. Scott,  Maximal subgroups of finite groups, J. Alg. 92,
44-80 (1985)

\bibitem{Ba} M. Baumann, The composition factors of the functor of permutation modules,
J. Alg. 344, 284-295 (2011)

\bibitem{Bo1} S. Bouc, A conjecture on $B$-groups,
Math. Z. 274, 367-372 (2013)


\bibitem{Bo2} S. Bouc,  Biset functors for finite groups, LNM. No.1990

\bibitem{Bo3} S. Bouc, Foncteurs d'ensembles munis d'une double action, J. Alg. 183, 664-736 (1996)

\bibitem{CR} C. Curtis, I. Reiner, Methods of representation theory (II), Wiley, (1994)

\bibitem{Da} E. Dade, Endo-Permutation Modules over p-Groups,
Ann. Math. I, 107(1978); II, 108(1978)


\bibitem{Dr} A. Dress,
 A characterisation of solvable groups,
Math. Z.  110,  213-217 (1969)

\bibitem{DH} W. Dwyer, H. Henn,
 Homotopy theoretic methods in group cohomology, Advanced courses in mathematics. CRM Barcelona, Birkh$\mathrm{\ddot{a}}$user Verlag, (2001)

\bibitem{Gl} D. Gluck. Idempotent formula for the Burnside ring with applications to
the p-subgroup simplicial complex. Illinois J. Math., 25, 63-67 (1981)

\bibitem{G} W. Gorenstein, Finite groups, Chelsea, London (1968)

\bibitem{Ht} A. Hatcher, Algebraic topology, Cambridge Univ. Press (2002)

\bibitem{LPS} L. Liebeck, C. Praeger, J. Saxl, A classification  of the maximal  subgroups  of the finite alternating  and symmetric  groups, J. Alg. 111, 365-383 (1987)

\bibitem{M} J. Munkres, Elements of algebraic topology, Westview Press (1996)

\bibitem{XZ} X. Xu, J. Zhang, Bouc's conjecture on $B$-group, arXiv:1701.05985v1

\bibitem{Y} T. Yoshida,
 Idempotents of Burnside rings and Dress induction theorem,
J. Alg. 80, 90-105 (1983)

\end{thebibliography}
\end{document}